 \newtheorem{thm}{Theorem}[section]
 \newtheorem{lem}[thm]{Lemma}
 \theoremstyle{definition}
 \newtheorem{defn}[thm]{Definition}
 \theoremstyle{remark}
 \newtheorem{rem}[thm]{Remark}
 \newtheorem*{ex}{Example}
 \numberwithin{equation}{section}
\begin{document}

\title[Riemann Solitons and Almost Riemann Solitons]
 {Riemann Solitons and Almost Riemann Solitons on Almost Kenmotsu Manifolds}

\author[Venkatesha, H.A. Kumara, D.M. Naik]{V. Venkatesha, H. Aruna Kumara and Devaraja Mallesha Naik}

\address{
Department of Mathematics\\
Kuvempu University\\
Shankaraghatta\\
Karnataka 577 451\\
India}

\email{vensmath@gmail.com} 


\email{arunmathsku@gmail.com}

\email{devarajamaths@gmail.com.}
\subjclass{53C25, 53C15, 53D15}

\keywords{Riemann soliton, Ricci soliton, almost Kenmotsu manifold, Einstein manifold.}


\begin{abstract}
The aim of this article is to study the Riemann soliton and gradient almost Riemann soliton on certain class of almost Kenmotsu manifolds. Also some suitable examples of Kenmotsu and $(\kappa,\mu)'$-almost Kenmotsu manifolds are constructed to justify our results.
\end{abstract}

\maketitle
\section{Introduction}
Hamilton \cite{Hamilton} introduced the concept of Ricci flow. The idea of Ricci flow generalized to the concept of Riemann flow (see \cite{Udriste,Udriste2}). As an analog of Ricci soliton, Hiri\u{c}a and Udri\c{s}te \cite{Hirica} introduced and studied Riemann soliton. This is appears as a self-similar solution of the Riemann flow \cite{Udriste}:
\begin{align*}
\frac{\partial}{\partial t} G(t)=-2R(g(t)),\quad t\in[0,I],
\end{align*}
where $G=\frac{1}{2}g\mathbin{\bigcirc\mspace{-15mu}\wedge\mspace{3mu}} g$, $R$ is the Riemann curvature tensor associated to the metric $g$ and $\mathbin{\bigcirc\mspace{-15mu}\wedge\mspace{3mu}}$ is Kulkarni-Nomizu product. These extensions are natural, since some results in the Riemann flow resembles the case of Ricci flow (for detail, see \cite{Udriste2}). For instance, the Riemann flow satisfies the short time existence and uniqueness \cite{Hirica2}. Further the authors in \cite{Step} characterize the Riemann soliton in terms of infinitesimal harmonic transformation. For $(0,2)$-tensors $A$ and $B$, thier Kulkarni-Nomizu product $A\mathbin{\bigcirc\mspace{-15mu}\wedge\mspace{3mu}}B$ is given by 
\begin{align}\label{1.1}
\nonumber (A\mathbin{\bigcirc\mspace{-15mu}\wedge\mspace{3mu}} B)(X_1,X_2,X_3,&X_4)=A(X_1,X_3)B(X_2,X_4)+A(X_2,X_4)B(X_1,X_3)\\
&-A(X_1,X_4)B(X_2,X_3)-A(X_2,X_3)B(X_1,X_4).
\end{align}
Riemann soliton is a smooth manifold $M$ together with Riemannian metric $g$ that satisfies
\begin{align}\label{1.2}
2R+\lambda (g\mathbin{\bigcirc\mspace{-15mu}\wedge\mspace{3mu}}g)+(g\mathbin{\bigcirc\mspace{-15mu}\wedge\mspace{3mu}}\pounds_V g)=0,
\end{align}
where $V$ is a vector field called as a potential vector field, $\pounds_V$ denotes the Lie-derivative and $\lambda$ is a constant. The Riemann soliton also corresponds as a fixed point of the Riemann flow, and they can be viewed as a dynamical system, on the space of Riemannian metric modulo diffeomorphism. Note that notion of Riemann soliton generalizes the space of constant sectional curvature (i.e., $R=kg\mathbin{\bigcirc\mspace{-15mu}\wedge\mspace{3mu}}g$, for some constant $k$). A Riemann soliton is called shrinking when $\lambda<0$, steady when $\lambda=0$ and expanding $\lambda>0$. If the vector field $V$ is the gradient of a potential function $u$, then we get the notion of gradient Riemann soliton. In such a case, the equation \eqref{1.2} transform into
\begin{align}\label{1.3}
R+\frac{1}{2}\lambda(g\mathbin{\bigcirc\mspace{-15mu}\wedge\mspace{3mu}}g)+(g\mathbin{\bigcirc\mspace{-15mu}\wedge\mspace{3mu}}Hess~u)=0,
\end{align}
where $Hess~u$ denotes the Hessian of the smooth function $u$. In \cite{Hirica}, it is proved that a Riemann soliton on a compact Riemannian manifold is a gradient. A Riemann soliton is called trivial when potential vector field $V$ vanishes identically, and this case manifold is of constant sectional curvature. If $\lambda$ appearing in equation \eqref{1.2} and \eqref{1.3} is a smooth function, then $g$ is called almost Riemann soliton and almost gradient Riemann soliton respectively. 

\par In \cite{Hirica}, Hiri\u{c}a and Udri\c{s}te studied Riemann soliton and gradient Riemann soliton within the framework of Sasakian manifold. In their study, it is proved that if Sasakian manifold admits a Riemann soliton whose soliton vector field $V$ is pointwise collinear with $\xi$ (or a gradient Riemann soliton and potential function is harmonic), then it is Sasakian space form. In this study, the authors exposes an open problem as classify gradient Riemann soliton for arbitrary potential function on Sasakian manifold. To fulfill this classification, the present authors in \cite{Dev} consider a Riemann soliton on contact manifold, and obtained several intersting results. These results on contact geometry intrigues us to study the Riemann soliton on other almost contact metric manifolds. In this paper, we classify certain class of almost Kenmotsu manifold which admits a Riemann soliton and almost gradient Riemann solion.

\section{Preliminaries}
Let $(M,g)$ be a $(2n+1)$-dimensional smooth Riemannian manifold. On this manifold if there exists a (1,1)-type tensor field $\varphi$, a global vector field $\xi$ and 1-form $\eta$ such that
\begin{align}
\label{2.1}\varphi^2 X=-X+\eta(X)\xi, \quad \eta(\xi)=1,\\
g(\varphi X,\varphi Y)=g(X,Y)-\eta(X)\eta(Y),
\end{align}
for any vector field $X,Y$ on $M$, then we say that $(\varphi,\xi,\eta,g)$ is an almost contact metric structure and $M$ is an almost contact metric manifold \cite{Blair}. Generally, $\xi$ and $\eta$ are called Reeb or characterstic vector field and almost contact 1-form respectively. We remark that an almost contact metric structure on a Riemannian manifold $M$ may be regarded as a reduction of the structure group $M$ to $U(n)\times 1$. For such a manifold, we define a fundamental 2-form $\Phi$ by $\Phi(X,Y)=g(X,\varphi Y)$. It is well known that the normality of an almost contact structure is expressed by vanishing of the tensor $N_\varphi=[\varphi,\varphi]+2d\eta\otimes\xi$, where $[\varphi,\varphi]$ is the Nijenhuis tensor of $\varphi$ (see \cite{Blair}).
\par Almost Kenmotsu manifold \cite{Janssens Vanhecke} is an almost contact metric manifold such that $d\eta=0$ and $d\Phi=2\eta\wedge\Phi$. A normal almost Kenmotsu manifold is called Kenmotsu manifold, and this normality condition is expressed as
\begin{align*}
(\nabla_X \varphi)Y=g(\varphi X,Y)\xi-\eta(Y)\varphi X,
\end{align*}
for any vector field $X,Y$ on $M$, where $\nabla$ denotes the Levi-Civita connection of the Riemannian metric $g$. On an almost Kenmotsu manifold, we set $\ell=R(\cdot,\xi)\xi$, $2h=\pounds_\xi \varphi$ and $h'=h\circ\varphi$, where $R$ denotes the curvature tensor of $M$ and $\pounds$ is the Lie-derivative. These mentioned tensor fields plays important role in almost Kenmotsu manifold, and it is easily seen that both are symmetric and satisfies the following equations
\begin{align}
\label{2.3}\nabla_X \xi=X-\eta(X)\xi+h'X,\\
\label{2.4}h\xi=\ell\xi=0,\quad trh=trh'=0,\quad h\varphi+\varphi h=0,\\
tr\ell=S(\xi,\xi)=g(Q\xi,\xi)=-2n-trh^2,
\end{align}
where $S$ is the  Ricci curvature tensor, $Q$ is the Ricci operator with respect to $g$ anb $tr$ is the trace operator. 

\begin{defn}
	On almost contact metric manifold $M$, a vector field $V$ is said to be infinitesimal
	contact transformation if $\pounds_V\eta=\sigma\eta$, for some function $\sigma$. In particular, we called $V$ is a strict infinitesimal contact transformation if $\pounds_V\eta=0$.
\end{defn}
Now, we recall some formulas which are helpfull to prove our main results. \\
Using the symmetry of $\pounds_V\nabla$ in the commutation formula (see Yano \cite{Yano}):
\begin{align*}
(\pounds_V&\nabla_X g-\nabla_X\pounds_V g-\nabla_{[V,X]}g)(Y,Z)\\
&=-g((\pounds_V\nabla)(X,Y),Z)-g((\pounds_V\nabla)(X,Z),Y),
\end{align*}
we derive
\begin{align}
\label{2.6}2g((\pounds_V\nabla)(X,Y),Z)=(\nabla_X\pounds_V g)(Y,Z)+(\nabla_Y\pounds_Vg)(Z,X)-(\nabla_Z\pounds_Vg)(X,Y).
\end{align}
The following well-known commutation equations is also known to us from Yano \cite{Yano}:
\begin{align}
\label{2.7}(\pounds_V R)(X,Y)Z=(\nabla_X\pounds_V\nabla)(Y,Z)-(\nabla_Y\pounds_V\nabla)(X,Z),\\
\label{2.8}\pounds_V\nabla_X Y-\nabla_X\pounds_VY-\nabla_{[V,X]}Y=(\pounds_V\nabla)(X,Y).
\end{align}

\section{Riemann soliton and almost gradient Riemann soliton on Kenmotsu manifolds}
Dileo and Pastore \cite{Dileo Pastore} proved that the almost contact metric structure of an almost Kenmotsu manifold is normal if and only if the foliations of the distribution $\mathcal{D}$ are K\"ahlerian and the $(1,1)$-type tensor field $h$ vanishes. In particular, we have immediately that almost Kenmotsu manifold is a Kenmotsu manifold if and only if $h=0$. The following formulae are holds for a Kenmotsu manifold \cite{Kenmotsu}
\begin{align}
\label{3.1}&\nabla_X \xi=X-\eta(X)\xi,\\
\label{3.2}&R(X,Y)\xi=\eta(X)Y-\eta(Y)X,\\
\label{3.3}&Q\xi=-2n\xi.
\end{align}
Differentiating \eqref{3.3} along an arbitrary vector field $X$ and recalling \eqref{3.1} we deduce
\begin{align}
\label{3.4}(\nabla_X Q)\xi=-QX-2nX.
\end{align}
In this section, we aim to investigate the existence of geometry of Riemann soliton and an almost gradient Riemann soliton on Kenmotsu manifold. First, we prove the following result.

\begin{lem}
	Let $M$ be a $(2n+1)$-dimensional Kenmotsu manifold. If $(g,V)$ is a Riemann soliton with soliton vector $V$ has a constant divergence, then the potential vector field and Ricci tensor satisfy
	\begin{align}
	\label{3.5}&div V=2n(1-\lambda),\\
	\label{3.6}&(\pounds_V S)(X,\xi)=\frac{1}{2n-1}\{-(Yr)+(\xi r)\eta(Y)\},
	\end{align}
	where $div$ denotes the divergence operator.
\end{lem}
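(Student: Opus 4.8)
I would convert the Riemann soliton equation \eqref{1.2} into an equivalent Ricci‑soliton type equation by contraction, and then exploit the scalar‑curvature rigidity of Kenmotsu manifolds together with the commutation formulas \eqref{2.6}--\eqref{2.8}. For \eqref{3.5}, first contract \eqref{1.2} (using \eqref{1.1}); with $tr\,g=2n+1$ and $tr(\pounds_V g)=2\,div\,V$ this produces a relation of the form
\begin{align*}
(2n-1)\,\pounds_V g=2S-\big(4n\lambda+2\,div\,V\big)g,
\end{align*}
i.e.\ $\pounds_V g=\tfrac{2}{2n-1}S+c\,g$ with $c$ a \emph{constant} --- this is precisely where the hypothesis that $div\,V$ is constant is used. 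Taking the metric trace of this identity expresses $div\,V$ as an affine function of the scalar curvature $r$, so constancy of $div\,V$ forces $r$ to be constant. Independently, contracting \eqref{3.4} and using the contracted second Bianchi identity $(div\,Q)(X)=\tfrac12(Xr)$ gives the classical Kenmotsu relation $\xi r=-2\big(r+2n(2n+1)\big)$; with $r$ constant this forces $r=-2n(2n+1)$, and feeding this back into the trace relation yields \eqref{3.5}.

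For \eqref{3.6}, I would start from $\pounds_V g=\tfrac{2}{2n-1}S+c\,g$ with $c$ constant, so $\nabla(\pounds_V g)=\tfrac{2}{2n-1}\nabla S$, and substitute this into \eqref{2.6}. Putting $Y=\xi$ and using \eqref{3.4} in the form $(\nabla_X S)(\xi,Z)=(\nabla_Z S)(X,\xi)=-S(X,Z)-2n\,g(X,Z)$, two of the three terms on the right of \eqref{2.6} cancel, leaving
\begin{align*}
(\pounds_V\nabla)(X,\xi)=\tfrac{1}{2n-1}(\nabla_\xi Q)X .
\end{align*}
Next, substitute this into \eqref{2.7} with $Z=\xi$ and take the trace over the first argument, using $(\pounds_V S)(X,\xi)=\sum_i g\big((\pounds_V R)(e_i,X)\xi,e_i\big)$. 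Expanding $(\nabla_W\pounds_V\nabla)(\cdot,\xi)$ and invoking \eqref{3.1} to replace $\nabla_W\xi$ by $W-\eta(W)\xi$, the symmetric term $(\pounds_V\nabla)(e_i,X)$ cancels against $(\pounds_V\nabla)(X,e_i)$, so the contraction reduces to $\tfrac{1}{2n-1}$ times an expression built from $div(\nabla_\xi Q)$, $X(\xi r)$ and $\eta(X)\,\xi r$ (here $tr(\nabla_\xi Q)=\xi r$).

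The decisive step is the computation of $div(\nabla_\xi Q)$: one commutes the two covariant derivatives by the Ricci identity, evaluates the emerging curvature terms via the Kenmotsu formulas $R(X,Y)\xi=\eta(X)Y-\eta(Y)X$ and $R(\xi,X)Y=\eta(Y)X-g(X,Y)\xi$ (the latter following from \eqref{3.2} and the pair symmetry of $R$), applies the contracted Bianchi identity once more, and simplifies the remaining $\eta$‑coefficients using $r+2n(2n+1)=-\tfrac12\xi r$. This collapses $div(\nabla_\xi Q)$ to a multiple of $X(\xi r)$, and since $\xi r=-2r+\mathrm{const}$ gives $X(\xi r)=-2(Xr)$, one is led to \eqref{3.6}. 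I expect this final bookkeeping --- keeping precise track of the second covariant derivatives of $S$ and of all the $\eta(X)$‑terms produced by \eqref{3.1}, so that the curvature contributions recombine, through the Kenmotsu scalar‑curvature identity, into the compact right‑hand side of \eqref{3.6} --- to be the main obstacle.
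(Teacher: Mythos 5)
There is a genuine gap, and it sits exactly at the step you treat as routine: ``feeding this back into the trace relation yields \eqref{3.5}.'' It does not. With the paper's conventions the contraction of \eqref{1.2} is $(2n-1)\pounds_V g=-2S-2(2n\lambda+div\,V)g$ (note the sign of the $S$-term, which you have reversed and which would also flip the sign of your final \eqref{3.6}); its metric trace is $4n\,div\,V+r+2n(2n+1)\lambda=0$. Your two observations are correct --- this forces $r$ to be constant, and $\xi r=-2(r+2n(2n+1))$ then forces $r=-2n(2n+1)$ --- but substituting back gives $div\,V=\tfrac{2n+1}{2}(1-\lambda)$, not $2n(1-\lambda)$. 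A trace identity cannot produce the coefficient $2n$; the paper gets it from a \emph{pointwise} tensor identity: it computes $(\pounds_V\nabla)(X,\xi)=\tfrac{2}{2n-1}(QX+2nX)$, feeds this into \eqref{2.7} to show $(\pounds_V R)(X,\xi)\xi=0$, Lie-differentiates $R(X,\xi)\xi=-X+\eta(X)\xi$ together with $\eta(X)=g(X,\xi)$ and $g(\xi,\xi)=1$, and arrives at $(2n\lambda+div\,V-2n)\{X-\eta(X)\xi\}=0$, whose trace over the $2n$-dimensional distribution $\ker\eta$ gives \eqref{3.5}. Your formula and \eqref{3.5} are simultaneously consistent only when $\lambda=1$, so the traced equation alone can never recover the stated conclusion.

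For \eqref{3.6} your strategy is structurally the paper's (compute $(\pounds_V\nabla)(\cdot,\xi)$ from \eqref{2.6}, insert into \eqref{2.7}, contract, and use $div\,Q=\tfrac12\,grad\,r$, $tr\,\nabla Q=grad\,r$), and the cancellation of the symmetric $(\pounds_V\nabla)$ terms you describe is real. But the part you defer --- evaluating $div(\nabla_\xi Q)$ by Ricci identities --- is precisely the computation that decides the result, and it is avoidable: the paper substitutes the Kenmotsu identities $(\nabla_X Q)\xi=-QX-2nX$ and $(\nabla_\xi Q)X=-2QX-4nX$ \emph{before} differentiating, so that only first covariant derivatives of $Q$ survive in \eqref{3.13} and no commutation of second derivatives is ever needed. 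As written, your argument establishes neither displayed formula of the Lemma: \eqref{3.5} comes out with the wrong coefficient, and \eqref{3.6} comes out with the wrong sign and with its hardest step left open.
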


\begin{proof}
	As a result of \eqref{1.1}, the Riemann soliton equation \eqref{1.2} can be expressed as
	\begin{align}
	\nonumber 2R(X,Y,&Z,W)+2\lambda\{g(X,W)g(Y,Z)-g(X,Z)g(Y,W)\}\\
	\nonumber &+\{g(X,W)(\pounds_V g)(Y,Z)+g(Y,Z)(\pounds_V g)(X,W)\\
	\label{3.7}&-g(X,Z)(\pounds_V g)(Y,W)-g(Y,W)(\pounds_V g)(X,Z)\}=0.
	\end{align}
	Contractiong \eqref{3.7} over $X$ and $W$, we obtain
	\begin{align}
	\label{3.8}(\pounds_V g)(Y,Z)+\frac{2}{2n-1}S(Y,Z)+\frac{2}{2n-1}(2n\lambda+div V)g(Y,Z)=0.
	\end{align}
	Taking covariant derivative of the above relation gives
	\begin{align}
	(\nabla_X \pounds_V g)(Y,Z)+\frac{2}{2n-1}(\nabla_X S)(Y,Z)=0,
	\end{align}
	where we applied $V$ has a constant divergence. Inserting the foregoing equation into \eqref{2.6} we obtain
	\begin{align}
	\label{3.10}g((\pounds_V\nabla)(X,Y),Z)=\frac{1}{2n-1}\{(\nabla_Z S)(X,Y)-(\nabla_X S)(Y,Z)-(\nabla_Y S)(Z,X)\}.
	\end{align}
	The present authors in \cite{Vens} and Ghosh in \cite{Ghosh} independently gave the complete proof of the following formula which is valid for any Kenmotsu manifold
	\begin{align}
	\label{3.11}(\nabla_\xi Q)X=-2QX-4nX.
	\end{align}
	Replacing $Y$ by $\xi$ in \eqref{3.10}, applying \eqref{3.4} and \eqref{3.11} we get
	\begin{align}
	\label{3.12}(\pounds_V\nabla)(X,\xi)=\frac{2}{2n-1}QX+\frac{4n}{2n-1}X.
	\end{align}
	Differentiating \eqref{3.12} covariantly along $Y$ and utilizing \eqref{3.1}, we find
	\begin{align*}
	(\nabla_Y\pounds_V\nabla)(X,\xi)+(\pounds_V\nabla)(X,Y)-\frac{2}{2n-1}\eta(Y)\{QX+2nX\}=\frac{2}{2n-1}(\nabla_Y Q)X.
	\end{align*} 
	Making use of this in the well known expression \eqref{2.7}, we have
	\begin{align}
	\nonumber (\pounds_V R)(X,Y)\xi=&\frac{2}{2n-1}\{\eta(X)QY-\eta(Y)QX+(\nabla_X Q)Y-(\nabla_Y Q)X\}\\
	\label{3.13}&+\frac{4n}{2n-1}\{\eta(X)Y-\eta(Y)X\}.
	\end{align}
	We insert $Y=\xi$ in the above relation and utilizing \eqref{3.4} and \eqref{3.11} to achieve $(\pounds_V R)(X,\xi)\xi=0$. On the other hand, operating $\pounds_V$ to the formula $R(X,\xi)\xi=-X+\eta(X)\xi$ (follows from \eqref{3.2}) yields
	\begin{align*}
	(\pounds_V R)(X,\xi)\xi+g(X,\pounds_V\xi)\xi-2\eta(\pounds_V\xi)X=\{(\pounds_V\eta)X\}\xi,
	\end{align*}
	which by virtue of $(\pounds_V R)(X,\xi)\xi=0$ becomes
	\begin{align}
	\label{3.14}g(X,\pounds_V\xi)\xi-2\eta(\pounds_V\xi)X=\{(\pounds_V\eta)X\}\xi.
	\end{align}
	With the aid of \eqref{3.3}, the equation \eqref{3.8} takes the form
	\begin{align}
	\label{3.15}(\pounds_Vg)(X,\xi)=-\frac{2}{2n-1}\{2n\lambda+div V-2n\}\eta(X).
	\end{align} 
	Now, Lie-differentiating $\eta(X)=g(X,\xi)$ and $g(\xi,\xi)=1$ along $V$ and taking into account \eqref{3.15} provides
	\begin{align*}
	(\pounds_V\eta)X-g(X,\pounds_V\xi)+\frac{2}{2n-1}\{2n\lambda+div V-2n\}\eta(X)=0,\\
	\eta(\pounds_V\xi)=\frac{1}{2n-1}\{2n\lambda+div V-2n\}.
	\end{align*}
	Utilizing these equations in \eqref{3.14} yields $(2n\lambda+divV-2n)\{X-\eta(X)\xi\}=0$. Tracing this provides \eqref{3.5}. Now contracting the equation \eqref{3.13} provides
	\begin{align}
	\label{3.16}(\pounds_V S)(Y,\xi)=\frac{1}{2n-1}\{-(Yr)-2(r+2n(2n+1))\}\eta(Y),
	\end{align}
	where we applied the well-known formula $div Q=\frac{1}{2}grad\,\, r$ and $tr\nabla Q=grad\,\, r$. Taking trace of \eqref{3.11} provides $(\xi r)=-2(r+2n(2n+1))$. By virtue of this and \eqref{3.16} gives \eqref{3.6}. This completes the proof.
\end{proof}

\begin{thm}\label{t3.2}
	Let $M$ be an $\eta$-Einstein Kenmotsu manifold of dimension higher than $3$. If $(g,V)$ is a Riemann soliton with $divV=constant$, then $M$ is Einstein.
\end{thm}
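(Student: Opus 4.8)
The plan is to combine the $\eta$-Einstein hypothesis with the structural identities already established in the Lemma. Write the Ricci tensor as $S = a\,g + b\,(\eta\otimes\eta)$ with $a,b\in C^{\infty}(M)$, so that $QX = aX + b\,\eta(X)\xi$. Evaluating at $\xi$ and using \eqref{3.3} gives $a+b=-2n$, and taking the trace gives $r = 2n(a-1)$. Substituting the $\eta$-Einstein form of $Q$ into \eqref{3.11} yields $\xi a = -(2a+4n) = 2b$ and $\xi b = -2b$; substituting it into the contracted second Bianchi identity $\operatorname{div}Q = \tfrac12\operatorname{grad}r$ gives, after using $\xi b = -2b$, the relation $(n-1)\operatorname{grad}a = 2(n-1)\,b\,\xi$. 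This is the point at which the hypothesis $\dim M = 2n+1 > 3$ is used: since $n>1$ we may cancel $n-1$ and conclude $\operatorname{grad}a = 2b\,\xi$, hence $\operatorname{grad}r = 2n\,\operatorname{grad}a = 4nb\,\xi$, i.e. $Xr = 4nb\,\eta(X)$ for all $X$.

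Next I would compute $(\pounds_V S)(X,\xi)$ in two ways. On one hand, inserting $\operatorname{grad}r = 4nb\,\xi$ into \eqref{3.6} immediately gives $(\pounds_V S)(X,\xi) = \tfrac{1}{2n-1}\{-4nb\,\eta(X) + 4nb\,\eta(X)\} = 0$. On the other hand, \eqref{3.5} fed into \eqref{3.15} shows $(\pounds_V g)(X,\xi) = 0$, and $\eta(\pounds_V\xi) = 0$; Lie-differentiating $\eta(X)=g(X,\xi)$ then gives $(\pounds_V\eta)(X) = g(X,\pounds_V\xi)$ and $(\pounds_V\eta)(\xi)=0$. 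Expanding $\pounds_V S = (Va)g + a\,\pounds_V g + (Vb)(\eta\otimes\eta) + b\big((\pounds_V\eta)\otimes\eta + \eta\otimes(\pounds_V\eta)\big)$, evaluating at $(X,\xi)$, and using that $a+b$ is constant, every term collapses except one, leaving $(\pounds_V S)(X,\xi) = b\,g(X,\pounds_V\xi)$. Comparing the two expressions gives $b\,g(X,\pounds_V\xi)=0$ for all $X$, that is, $b\,\pounds_V\xi = 0$ on $M$.

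To finish I would argue by contradiction. If $b$ is not identically zero, the set $U = \{\,p\in M : b(p)\neq 0\,\}$ is non-empty and open, and on $U$ we have $\pounds_V\xi = 0$, hence also $\pounds_V\eta = 0$. Putting $\nabla_X\xi = X - \eta(X)\xi$ together with $\pounds_V\xi = 0$ into the commutation formula \eqref{2.8} gives, on $U$,
\[
(\pounds_V\nabla)(X,\xi) = \pounds_V(\nabla_X\xi) - \nabla_{[V,X]}\xi = -(\pounds_V\eta)(X)\,\xi = 0 .
\]
But \eqref{3.12} states $(\pounds_V\nabla)(X,\xi) = \tfrac{2}{2n-1}QX + \tfrac{4n}{2n-1}X$, so on $U$ we would have $QX = -2nX$ for every $X$, which forces $b = 0$ there, contradicting the definition of $U$. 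Therefore $b\equiv 0$, so $a = -2n$ and $S = -2n\,g$; that is, $M$ is Einstein.

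The main obstacle I anticipate is precisely this last dichotomy. The natural computations only produce $b\,\pounds_V\xi = 0$, and ruling out the branch where $\pounds_V\xi$ vanishes while $b$ does not seems to require dropping down from the Ricci-tensor level to the connection-level identity \eqref{3.12}. A more delicate — and genuinely dimension-dependent — point is establishing that $\operatorname{grad}r$ is collinear with $\xi$, since that is what makes the right-hand side of \eqref{3.6} vanish; this step would fail in dimension three.
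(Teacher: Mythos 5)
Your proof is correct and follows essentially the same route as the paper's: force the right-hand side of \eqref{3.6} to vanish by showing $Dr=(\xi r)\xi$ in dimension $>3$, compute $(\pounds_V S)(\cdot,\xi)$ a second way to obtain $b\,\pounds_V\xi=0$ (the paper's equivalent form is $(r+2n(2n+1))\pounds_V\xi=0$, since $r+2n(2n+1)=-2nb$), and then eliminate the branch where $\pounds_V\xi=0$ but $b\neq0$ by feeding \eqref{2.8} into \eqref{3.12} to get $QX=-2nX$ there. The only real difference is that you re-derive the collinearity $Dr=(\xi r)\xi$ from \eqref{3.11} and the contracted second Bianchi identity, whereas the paper cites it as Lemma~3.4 of an external reference; your version is self-contained and makes explicit exactly where the hypothesis $n>1$ enters.
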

\begin{proof}
	We know that a Kenmotsu manifold $M$ is $\eta$-Einstein if and only if 
	\begin{align}
	\label{3.17}S(X,Y)=\left(\frac{r}{2n}+1\right)g(X,Y)-\left(\frac{r}{2n}+2n+1\right)\eta(X)\eta(Y).
	\end{align}
	With the aid of the above equation, it has been proved by present authors in
	(\cite{Vens}, Lemma 3.4) that on $\eta$-Einstein Kenmotsu manifold of $dim>3$ there holds
	\begin{align*}
	Dr=(\xi r)\xi.
	\end{align*}
	Making use of this in \eqref{3.6}, we have $(\pounds_V S)(X,\xi)=0$. Now, applying $\pounds_V$ to \eqref{3.3}, recalling \eqref{3.17}, \eqref{3.15}, \eqref{3.5} and $\eta(\pounds_V\xi)=0$, we obtain
	\begin{align}
	\label{3.18}(r+2n(2n+1))\pounds_V\xi=0.
	\end{align}
	Suppose that $r\neq2n(2n+1)$ in some open set $\mathcal{O}$ of M. Then on $\mathcal{O}$, $\pounds_V\xi=0=\pounds_V\eta$. Replacing $Y$ by $\xi$ in \eqref{2.8} and using $\pounds_V\xi=0=\pounds_V\eta$, we have
	\begin{align*}
	(\pounds_V\nabla)(X,\xi)=&\pounds_VX-\eta(X)\pounds_V\xi-\{(\pounds_V\eta)X\}\xi\\
	&-\eta(\pounds_V X)\xi-\pounds_VX+\eta(\pounds_V X)\xi=0.
	\end{align*}
	By virtue of this in \eqref{3.12}, we have $QX=-2nX$. Taking trace of this gives $r=-2n(2n+1)$ on $\mathcal{O}$, which is a contradiction on $\mathcal{O}$. Thus, equation \eqref{3.18} gives $r=-2n(2n+1)$ and therefore we can conclude from
	$\eta$-Einstein condition \eqref{3.17} that $M$ is Einstein. 
\end{proof}
It is known that any $3$-dimensional Kenmotsu manifold is $\eta$-Einstein. As a result of Theorem~\ref{t3.2}, it is interesting to study Riemann soliton in Kenmotsu $3$-manifold and here, we prove the following outcome.
\begin{thm}\label{t3.3}
	Let be a 3-dimensional Kenmotsu manifold. If $(g,V)$ represents a Riemann soliton with soliton vector $V$ has a constant divergence, then $M$ is of constant negative curvature $-1$.
\end{thm}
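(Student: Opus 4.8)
The plan is to combine two features special to dimension three: every $3$-dimensional Kenmotsu manifold is $\eta$-Einstein, so its Ricci tensor has the explicit form \eqref{3.17} with $n=1$, and in dimension three the curvature operator is algebraically determined by $S$ and $r$,
\begin{align*}
R(X,Y)Z = g(Y,Z)QX - g(X,Z)QY + S(Y,Z)X - S(X,Z)Y - \frac{r}{2}\{g(Y,Z)X - g(X,Z)Y\}.
\end{align*}
Feeding both of these into the Riemann soliton equation should force $r$ to be constant; the Kenmotsu trace identity coming from \eqref{3.11} then fixes its value and the conclusion drops out.

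First I would set $n=1$ in the contracted soliton identity \eqref{3.8} and use $divV = 2(1-\lambda)$ from \eqref{3.5}; the coefficient $2n\lambda+divV$ collapses to $2$, and inserting \eqref{3.17} gives the clean relation
\begin{align*}
(\pounds_V g)(X,Y) = -(r+6)\{g(X,Y)-\eta(X)\eta(Y)\}.
\end{align*}
Substituting \eqref{3.17} into the displayed three-dimensional curvature identity and lowering an index gives, with $P(X,Y,Z,W) := g(Y,Z)g(X,W)-g(X,Z)g(Y,W)$ and $N(X,Y,Z,W)$ the analogous $\eta\otimes\eta$-term $g(Y,Z)\eta(X)\eta(W)-g(X,Z)\eta(Y)\eta(W)+\eta(Y)\eta(Z)g(X,W)-\eta(X)\eta(Z)g(Y,W)$,
\begin{align*}
2R(X,Y,Z,W) = (r+4)\,P(X,Y,Z,W) - (r+6)\,N(X,Y,Z,W).
\end{align*}

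The decisive step is to insert the last two relations into the expanded soliton equation \eqref{3.7}. One checks that the $N$-contribution from the curvature term and the $N$-contribution produced by the $\pounds_V g$ terms have equal magnitude $r+6$ and opposite sign, so they cancel, and \eqref{3.7} reduces to $(2\lambda-r-8)\,P(X,Y,Z,W)=0$. Since $P$ is not identically zero, $r=2\lambda-8$, which is constant because $\lambda$ is a constant for a Riemann soliton; hence $\xi r=0$. On the other hand, tracing \eqref{3.11} gives $\xi r=-2(r+6)$, and therefore $r=-6$. Putting $r=-6$ back into \eqref{3.17} yields $S=-2g$, so $M$ is Einstein, and putting $r=-6$ into the three-dimensional curvature identity yields $R(X,Y)Z=g(X,Z)Y-g(Y,Z)X$; thus $M$ has constant sectional curvature $-1$.

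The only laborious part should be the bookkeeping in this last substitution — verifying that the $\eta$-terms annihilate each other in \eqref{3.7}; everything else follows in a line each from \eqref{3.5}, \eqref{3.8}, \eqref{3.17} and \eqref{3.11}. I would stress that constancy of $\lambda$ is used essentially (it is what makes $r=2\lambda-8$ constant); the same computation for an almost Riemann soliton yields only a pointwise relation between $r$ and $\lambda$ and would need a further ingredient.
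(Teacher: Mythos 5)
Your proof is correct, and it takes a genuinely different route from the paper's. I checked the decisive cancellation: with $n=1$ equation \eqref{3.8} together with \eqref{3.5} gives $2\lambda+\mathrm{div}\,V=2$, hence $\pounds_Vg=-(r+6)(g-\eta\otimes\eta)$, the three-dimensional curvature decomposition combined with \eqref{n1} gives $2R=(r+4)P-(r+6)N$ in your notation, and the $\pounds_Vg$-block of \eqref{3.7} contributes $-2(r+6)P+(r+6)N$; the $N$-terms indeed annihilate and \eqref{3.7} collapses to $(2\lambda-r-8)P=0$, so $r=2\lambda-8$ is constant, whence $\xi r=0$ and the trace of \eqref{3.11} forces $r=-6$. (In fact you do not even need \eqref{3.5} for constancy of $r$: writing $c=2\lambda+\mathrm{div}\,V$, the same computation yields $r=2\lambda-4c$, still a constant.) The paper instead never returns to the full four-tensor equation after contracting it: it works with the trace \eqref{3.8}, the commutation formulas \eqref{2.6}--\eqref{2.8}, and identity \eqref{3.6} to reach the relation $(r+6)g(X,\pounds_V\xi)=2\{X(r)-\xi(r)\eta(X)\}$, and then runs a two-case argument ($r=-6$ versus $r\neq-6$ on an open set) that eliminates the second case by deriving $QX=-2X$ and hence $r=-6$, a contradiction. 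Your argument is shorter and more transparent because it exploits precisely what distinguishes a Riemann soliton from a Ricci soliton --- the uncontracted curvature equation --- together with the fact that in dimension three the Weyl tensor vanishes, so the curvature is an algebraic function of $S$ and $r$; the paper's Lie-derivative machinery buys nothing extra here, though it is the template that generalizes to the higher-dimensional $\eta$-Einstein case of Theorem~\ref{t3.2}, where the full curvature tensor is no longer determined by the Ricci tensor. Your closing remark is also accurate: constancy of $\lambda$ enters only through making $r=2\lambda-8$ constant, so the same computation for an almost Riemann soliton gives only a pointwise identity.
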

\begin{proof}
It is well known that the Riemannian curvature tensor of $3$-dimensional Riemannian manifold is given by
\begin{align}
\nonumber R(X,Y)Z=&g(Y,Z)QX-g(X,Z)QY+g(QY,Z)X-g(QX,Z)Y\\
\label{n}&-\frac{1}{2}\{g(Y,Z)X-g(X,Z)Y\}.
\end{align}
Replacing $\xi$ in place of $Y$ and $Z$ in above equation, employing \eqref{3.2} and \eqref{3.3} gives
\begin{align*}
QX=\left(\frac{r}{2}+1\right)X-\left(\frac{r}{2}+3\right)\eta(X)\xi,
\end{align*}
which is equivalent to
\begin{align}
\label{n1}S(X,Y)=\left(\frac{r}{2}+1\right)g(X,Y)-\left(\frac{r}{2}+3\right)\eta(X)\eta(Y).
\end{align}
Applying $\pounds_V$ to \eqref{3.3}, recalling \eqref{n1} yields 
\begin{align*}
(\pounds_V S)(X,\xi)+\left(\frac{r}{2}+1\right)g(X,\pounds_V\xi)-\left(\frac{r}{2}+3\right)\eta(X)\eta(\pounds_V\xi)=-2(\pounds_V\eta)(X).
\end{align*} 
This together with \eqref{3.6} provides
\begin{align}
\nonumber-Y(r)+\xi(r)\eta(Y)+\left(\frac{r}{2}+1\right)g(X,\pounds_V\xi)\\
\label{n2}-\left(\frac{r}{2}+3\right)\eta(X)\eta(\pounds_V\xi)+2(\pounds_V\eta)(X)=0.
\end{align}
By virtue of \eqref{3.5} and \eqref{3.15}, it follows from \eqref{n2} that
\begin{align}
\label{n3}(r+6)g(X,\pounds_V\xi)=2\{Y(r)-\xi(r)\eta(Y)\},
\end{align}
where we used Theorem~4.1 of Wang \cite{Wang}. Now suppose that $r=-6$. Then from \eqref{n1} we have $QX=-2X$, and substituting this in \eqref{n} one gets
\begin{align*}
R(X,Y)Z=-\{g(Y,Z)X-g(X,Z)Y\},
\end{align*}
which means $M$ is of constant curvature $-1$.
\par Now we suppose that $r\neq-6$ in some open set $\mathcal{O}$ of $M$. Then on $\mathcal{O}$, the relation \eqref{n3} can be written as
\begin{align}
\label{n4}\pounds_V\xi=f\{Dr-\xi(r)\xi\},
\end{align}
where $f=\frac{2}{r+6}$. Taking $Y$ by $\xi$ in \eqref{2.8} and using \eqref{3.1}, \eqref{n4} and \eqref{3.12}, it follows that
\begin{align*}
\nonumber f\{&X(r)\eta(Y)+Y(r)\eta(X)-2\xi(r)\eta(X)\eta(Y)+g(\nabla_X Dr,Y)-X(\xi(r))\eta(Y)\}\\
&+X(f)\{Y(r)-\xi(r)\eta(Y)\}+\left\{f~\xi(r)-\frac{1}{2}\xi(r)\right\}g(\varphi X,\varphi Y)=0.
\end{align*}
Antisymmetrizing the foregoing equation and keeeping in mind the Poincare lemma: $g(\nabla_X Dr,Y)=g(\nabla_Y Dr,X)$, we find
\begin{align*}
f\{Y(\xi(r))\eta(X)-X(\xi(r))\eta(Y)\}+X(f)\{Y(r)-\xi(r)\eta(Y)\}\\
-Y(f)\{X(r)-\xi(r)\eta(X)\}=0.
\end{align*}
Putting $Y=\xi$ in the above equation and using Theorem~4.1 of Wang \cite{Wang} one get
\begin{align*}
(2f-\xi(f))\{X(r)-\xi(r)\eta(X)\},
\end{align*}
which implies
\begin{align}
\label{n5}(2f-\xi(f))\{Dr-\xi(r)\xi\}.
\end{align}
In the following first case we show that $Dr=\xi(r)\xi$, one get $g(Dr,\xi)=-2(r+6)$. Since $f=\frac{2}{r+6}$ we obtain
$\xi(f)=2f$. In the second case we show $\xi(f)=2f$ implies $Dr=\xi(r)\xi$. Thus for any point $p\in\mathcal{O}$, we have $(Dr)_p= \xi_p(r)\xi_p$ if and only if $\xi_p(f)=2f$. Hence from \eqref{n5}, we have either $\xi(f)=2f$ or $Dr=\xi(r)\xi$.\\ \\
\textit{Case~1}: First we assume
\begin{align}
Dr=\xi(r)\xi, 
\end{align}
and so \eqref{n4} becomes $\pounds_V\xi=0$ on the open set $\mathcal{O}$ of $M$. Setting $Y$ by $\xi$ in \eqref{2.8} and using $\pounds_V\xi=0$, we have $(\pounds_V \nabla)(X,\xi)=0$. This together with \eqref{3.12} provides $QX=-2X$. Contracting this with respect to $X$ gives $r=-6$ which yields a contradiction.\\ \\ 
\textit{Case~2}: Suppose if $\xi(f)=2f$, then using $f=\frac{2}{r+6}$ we obtain $\xi(r)=-2(r+6)$  which means
\begin{align*}
g(Dr,\xi)=-2(r+6).
\end{align*}
As $r\neq -6$ on $\mathcal{O}$, the above equation implies $Dr=f\xi$, for some smooth function $f$. In fact, we have $Dr=\xi(r)\xi$ and so following the \textit{Case~1} we arrive the contradiction. This completes the proof.
\end{proof}

Now, we consider a Kenmotsu metric as a gradient almost Riemann soliton and first, we need the following result to prove our main theorem.
\begin{lem}
	For gradient almost Riemann soliton, the following formula is valid
	\begin{align}
	\nonumber R(X,Y)Du=&\frac{1}{2n-1}\{(\nabla_Y Q)X-(\nabla_X Q)Y\}\\
	\label{3.19}&+\frac{1}{2n-1}\{Y(2n\lambda+\Delta u)X-X(2n\lambda+\Delta u)Y\},
	\end{align}
	where $\Delta u=div Du$, $\Delta$ is the Laplacian operator.
\end{lem}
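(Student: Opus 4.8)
The plan is to convert the gradient almost Riemann soliton equation \eqref{1.3} into a pointwise formula for $Hess~u$, and then to feed that formula into the Ricci identity for the vector field $Du$. Since the potential vector field is $V=Du$, we have $\pounds_{Du}g=2\,Hess~u$ and $div\,V=div\,Du=\Delta u$. Expanding \eqref{1.3} with the help of \eqref{1.1}, in exactly the way that \eqref{1.2} was expanded to \eqref{3.7}, produces a $(0,4)$-tensor identity; contracting this identity over a pair of indices, precisely as \eqref{3.7} was contracted to give \eqref{3.8}, yields
\begin{align}\label{grs-hess}
Hess~u(Y,Z)+\frac{1}{2n-1}S(Y,Z)+\frac{1}{2n-1}\left(2n\lambda+\Delta u\right)g(Y,Z)=0 .
\end{align}
Using $Hess~u(Y,Z)=g(\nabla_Y Du,Z)$ and $S(Y,Z)=g(QY,Z)$, this is equivalent to the operator identity
\begin{align}\label{grs-op}
\nabla_Y Du=-\frac{1}{2n-1}QY-\frac{1}{2n-1}\left(2n\lambda+\Delta u\right)Y .
\end{align}

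The second step is to compute $R(X,Y)Du=\nabla_X\nabla_Y Du-\nabla_Y\nabla_X Du-\nabla_{[X,Y]}Du$ by inserting \eqref{grs-op} into each of the three terms. Differentiating \eqref{grs-op} by the Leibniz rule, $\nabla_X\nabla_Y Du$ splits into the four pieces $-\frac{1}{2n-1}(\nabla_X Q)Y$, $-\frac{1}{2n-1}Q(\nabla_X Y)$, $-\frac{1}{2n-1}X(2n\lambda+\Delta u)Y$ and $-\frac{1}{2n-1}(2n\lambda+\Delta u)\nabla_X Y$; the terms $\nabla_Y\nabla_X Du$ and $\nabla_{[X,Y]}Du$ are handled the same way. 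In the alternating sum the contributions $Q(\nabla_X Y)$, $Q(\nabla_Y X)$ and $Q[X,Y]$ add up to $Q(-\nabla_X Y+\nabla_Y X+[X,Y])=0$ because $\nabla$ is torsion-free, and for the same reason the scalar multiples of $\nabla_X Y$, $\nabla_Y X$ and $[X,Y]$ also cancel. What survives is precisely
\begin{align*}
R(X,Y)Du=\frac{1}{2n-1}\{(\nabla_Y Q)X-(\nabla_X Q)Y\}+\frac{1}{2n-1}\{Y(2n\lambda+\Delta u)X-X(2n\lambda+\Delta u)Y\},
\end{align*}
which is \eqref{3.19}.

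Most of this is bookkeeping, and it is in fact lighter than the corresponding passage for a (non-gradient) Riemann soliton: in the gradient situation \eqref{grs-op} already gives $\nabla_Y Du$ explicitly in terms of $Q$ and $g$, so the commutation formula \eqref{2.6} is never invoked. The point that needs genuine care is that $\lambda$ and $\Delta u$ are functions here, not constants, and hence may not be commuted past covariant derivatives; it is exactly their differentials $X(2n\lambda+\Delta u)$ and $Y(2n\lambda+\Delta u)$ that produce the second bracket in \eqref{3.19}. One should also keep in mind the bookkeeping fact that the $\Delta u$ in \eqref{grs-hess} comes from the trace of $\pounds_{Du}g$ in the contraction step, i.e.\ from $div\,Du=\Delta u$. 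No feature of the Kenmotsu structure is used, so the identity holds on any $(2n+1)$-dimensional gradient almost Riemann soliton.
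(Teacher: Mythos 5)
Your proof is correct and follows essentially the same route as the paper: contract the gradient almost Riemann soliton equation to obtain $\nabla_Y Du=-\frac{1}{2n-1}QY-\frac{1}{2n-1}(2n\lambda+\Delta u)Y$ (the paper's \eqref{3.20}), then insert this into $R(X,Y)=\nabla_X\nabla_Y-\nabla_Y\nabla_X-\nabla_{[X,Y]}$ and use torsion-freeness to cancel the connection terms. You merely spell out the bookkeeping (in particular the derivatives of the non-constant function $2n\lambda+\Delta u$) that the paper leaves as "straightforward computations."
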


\begin{proof}
	Contracting gradient almost Riemann soliton equation \eqref{1.3}, we get
	\begin{align*}
	Hess~u+\frac{1}{2n-1}S+\frac{1}{2n-1}(2n\lambda+\Delta u)g=0.
	\end{align*}
	Note that the above equation may exhibited as
	\begin{align}
	\label{3.20}\nabla_Y Du=-\frac{1}{2n-1}QY-\frac{1}{2n-1}(2n\lambda+\Delta u)Y.
	\end{align}
	By straightforward computations, using the well known expression of the curvature tensor:
	\begin{align*}
	R(X,Y)=\nabla_X\nabla_Y-\nabla_Y\nabla_X-\nabla_{[X,Y]},
	\end{align*}
	and the repeated use of equation \eqref{3.20} gives the desired result.
\end{proof}

\begin{thm}\label{t3.5}
	If metric of a Kenmotsu manifold $M$ of dimeninsion $(2n+1)$ represents a gradient almost Riemann soliton, then either $M$ is Einstein or the soliton vector field $V$ is pointwise collinear with the characteristic vector field $\xi$ on an open set $\mathcal{O}$ of $M$. In the first case, if $M$ is complete, then $M$ is locally isometric to a hyperbolic space $\mathbb{H}^{2n+1}$, and the function $2n\lambda+\Delta u$, upto an additive constant, can be expressed as a linear combination of $cosh\,t$ and $sinh\,t$.
\end{thm}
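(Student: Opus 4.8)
The plan is to combine the two identities of the preceding lemma, \eqref{3.19} and its contracted form \eqref{3.20}, with the Kenmotsu relations \eqref{3.1}--\eqref{3.4} and \eqref{3.11}. First I would set $Y=\xi$ in \eqref{3.19}: by \eqref{3.2} one computes $R(X,\xi)Du=(Xu)\xi-(\xi u)X$, while \eqref{3.4} and \eqref{3.11} give $(\nabla_\xi Q)X-(\nabla_X Q)\xi=-QX-2nX$. Substituting and solving for $QX$ yields, with $w:=2n\lambda+\Delta u+(2n-1)u$ (the combination that naturally appears),
\begin{align*}
QX=(-2n+\xi(w))X-(Xw)\,\xi.
\end{align*}
Since $Q$ is symmetric, its skew part must vanish, which gives $Dw=\xi(w)\,\xi$, hence $QX=\alpha X+\beta\,\eta(X)\xi$ with $\alpha:=-2n+\xi(w)$ and $\beta:=-\xi(w)$; thus $M$ is $\eta$-Einstein. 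Comparing with \eqref{3.17} we read off $\alpha=\frac{r}{2n}+1$ and $\beta=-\left(\frac{r}{2n}+2n+1\right)$, so $\alpha+\beta=-2n$, $\alpha+2n=-\beta$, and $\xi(w)=-\beta$.

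Next I would pin down $Dr$. As in (\cite{Vens}, Lemma 3.4) --- equivalently, by a short divergence computation from the $\eta$-Einstein form of $Q$ together with $div\,Q=\frac12 Dr$ --- on an $\eta$-Einstein Kenmotsu manifold of dimension greater than $3$ one has $Dr=\xi(r)\xi$, and the trace of \eqref{3.11} gives $\xi(r)=-2(r+2n(2n+1))=4n\beta$. (In dimension three the same conclusion is available from Wang \cite{Wang}.) Now substitute the $\eta$-Einstein expression for $Q$, the relation $Dr=\xi(r)\xi$, and $D\mu=Dw-(2n-1)Du=-\beta\xi-(2n-1)Du$ (where $\mu:=2n\lambda+\Delta u$) into \eqref{3.19}. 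After computing $\nabla Q$ for the $\eta$-Einstein operator, all the $r$- and $\beta$-terms cancel and the identity telescopes to
\begin{align*}
R(X,Y)Du=(Xu)\,Y-(Yu)\,X.
\end{align*}
Contracting over $X$ gives $g(QY,Du)=-2n\,(Yu)$, i.e.\ $QDu=-2n\,Du$; since $QDu=\alpha Du+\beta(\xi u)\xi$ and $\alpha+2n=-\beta$, this collapses to $\beta(Du-(\xi u)\xi)=0$. Hence the dichotomy: either $\beta\equiv0$, in which case $r=-2n(2n+1)$ and $QX=-2nX$, so $M$ is Einstein; or $\beta\neq0$ on an open set $\mathcal{O}$, where then $Du=(\xi u)\xi$, i.e.\ the potential vector field $V=Du$ is pointwise collinear with $\xi$ on $\mathcal{O}$.

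For the last assertion, suppose $M$ is complete and Einstein. Then $\beta\equiv0$, so $\xi(w)=0$ and, by $Dw=\xi(w)\xi$, the function $w=2n\lambda+\Delta u+(2n-1)u$ is a constant $c_0$; thus $\mu=c_0-(2n-1)u$. With $QY=-2nY$, \eqref{3.20} reads $Hess~u=\frac{2n-\mu}{2n-1}\,g=(u+b)\,g$ with $b:=\frac{2n-c_0}{2n-1}$, so $v:=u+b$ satisfies $Hess~v=v\,g$. For a complete manifold this is precisely the hypothesis of the classical rigidity theorem of Tashiro (and Kanai), which, apart from the warped-product case $\mathbb{R}\times_{e^t}N$ --- in which $V$ is again collinear with $\xi$ --- forces $M$ to be locally isometric to $\mathbb{H}^{2n+1}$. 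Finally, since $\nabla_\xi\xi=0$ by \eqref{3.1}, the integral curves of $\xi$ are unit-speed geodesics; evaluating \eqref{3.20} along such a curve with $\frac{d}{dt}=\xi$ gives $\frac{d^2u}{dt^2}=u+\frac{2n-c_0}{2n-1}$, so $u=A\cosh t+B\sinh t-\frac{2n-c_0}{2n-1}$ and therefore $2n\lambda+\Delta u=2n-(2n-1)(A\cosh t+B\sinh t)$, which up to the additive constant $2n$ is a linear combination of $\cosh t$ and $\sinh t$.

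The step I expect to cost the most effort is the middle one: computing $\nabla Q$ for the $\eta$-Einstein operator, establishing $Dr=\xi(r)\xi$ with the exact value $\xi(r)=4n\beta$, and verifying that \eqref{3.19} really telescopes to $R(X,Y)Du=(Xu)Y-(Yu)X$ --- this cancellation is exact only because $\alpha+2n=-\beta$ and $\xi(r)=4n\beta$, so a slip in either constant would break it. Beyond that, the completeness part leans on the external rigidity theorem for $Hess~v=v\,g$, and the three-dimensional case must borrow $Dr=\xi(r)\xi$ from \cite{Wang} in place of the divergence computation.
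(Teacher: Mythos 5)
Your proposal is correct and follows essentially the same route as the paper: substituting $Y=\xi$ into \eqref{3.19} to obtain the $\eta$-Einstein condition, showing $Dr=\xi(r)\xi$, contracting to reach $(r+2n(2n+1))(Du-\xi(u)\xi)=0$, and invoking Tashiro's theorem in the Einstein case (your intermediate identity $R(X,Y)Du=(Xu)Y-(Yu)X$ and the subsequent contraction reproduce the paper's equation \eqref{3.28}). The only notable divergence is that you import $Dr=\xi(r)\xi$ from (\cite{Vens}, Lemma 3.4) with a separate patch for dimension three, whereas the paper gets it uniformly in all dimensions by applying $d$ to \eqref{3.23} and using $d\eta=0$; both justifications are valid.
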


\begin{proof}
	Replacing $Y$ by $\xi$ in \eqref{3.19} and employing \eqref{3.4} and \eqref{3.11}, we find
	\begin{align}
	\label{3.21}R(X,\xi)Du=-\frac{1}{2n-1}\{QX+2nX\}+\frac{1}{2n-1}\{\xi(2n\lambda+\Delta u)X-X(2n\lambda+\Delta u)\xi\}.
	\end{align} 
	From \eqref{3.2}, we have $R(X,\xi)Y=g(X,Y)\xi-\eta(Y)X$. Employing this into \eqref{3.21} provides
	\begin{align}
	\nonumber g(X,Du+\frac{1}{2n-1}D(2n\lambda+\Delta u))&\xi-\{\xi(u)+\frac{1}{2n-1}\xi(2n\lambda+\Delta u)\}X\\
	\label{3.22}&=-\frac{1}{2n-1}\{QX+2nX\}.
	\end{align}
	Inner product of the previous equation with $\xi$ and using \eqref{3.3} provides $X(u+\frac{1}{2n-1}(2n\lambda+\Delta u))=\xi(u+\frac{1}{2n-1}(2n\lambda+\Delta u))\eta(X)$, from which we have
	\begin{align}
	\label{3.23}d(u+\frac{1}{2n-1}(2n\lambda+\Delta u))=\xi(u+\frac{1}{2n-1}(2n\lambda+\Delta u))\eta,
	\end{align}		
	where $d$ is the exterior derivative. This means that $u+\frac{1}{2n-1}(2n\lambda+\Delta u)$ is invariant along
	the distribution $\mathcal{D}$, i.e., $X(u+\frac{1}{2n-1}(2n\lambda+\Delta u))=0$ for any vector field $X\in\mathcal{D}$. Utilizing \eqref{3.23} into \eqref{3.22}, we obtain
	\begin{align}
	\nonumber \{\xi(u)+\frac{1}{2n-1}\xi(2n\lambda+\Delta u)\}\eta(X)&\xi-\{\xi(u)+\frac{1}{2n-1}\xi(2n\lambda+\Delta u)\}X\\
	\label{3.24}&=-\frac{1}{2n-1}\{QX+2nX\}.
	\end{align} 
	Contracting the above equation, one immediately obtain
	\begin{align}
	\label{3.25}\xi(u+\frac{1}{2n-1}(2n\lambda+\Delta u))=\frac{1}{2n-1}\{\frac{r}{2n}+2n+1\}.
	\end{align}
	Making use of last equation in \eqref{3.24} one can obtain $\eta$-Einstein condition \eqref{3.17}. Now, we contract equation \eqref{3.19} over $X$ to deduce
	\begin{align*}
	S(Y, Du)=\frac{1}{2(2n-1)}Y(r)+\frac{2n}{2n-1}Y(2n\lambda+\Delta u).
	\end{align*}
	In the contrast of above equation with \eqref{3.17} we obtain
	\begin{align}
	\nonumber(r+2n)Y(u)-(&r+2n(2n+1))\xi(u)\eta(Y)-\frac{1}{2(2n-1)}Y(r)\\
	\label{3.26} &-\frac{4n^2}{2n-1}Y(2n\lambda+\Delta u)=0.
	\end{align}
	Inserting $Y=\xi$ in \eqref{3.26} and recalling \eqref{3.25} one can get $\xi(r)=-2\{r+2n(2n+1)\}$. Action of $d$ on \eqref{3.23}, we get $dr\wedge\eta=0$, where we used $d^2=0$ and $d\eta=0$. Hence by virtue of $\xi(r)=-2\{r+2n(2n+1)\}$ we have
	\begin{align}
	\label{3.27} Dr=-2\{r+2n(2n+1)\}\xi.
	\end{align}
	 Suppose that $X$ in \eqref{3.26} is orthogonal to $\xi$. Taking into account $(u+\frac{1}{2n-1}(2n\lambda+\Delta u))$ being a constant along $\mathcal{D}$ and utilizing \eqref{3.23} and \eqref{3.27}, then we get $(r+2n(2n+1))X(u)=0$ for any $X\in\mathcal{D}$. This implies that
	 \begin{align}
	 \label{3.28}(r+2n(2n+1))(Du-\xi(u)\xi)=0.
	 \end{align}
	 \par If $r=-2n(2n+1)$, then the equation \eqref{3.17} shows that $QX=-2nX$, and hence $M$ is Einstein. Since $r=-2n(2n+1)$, it follows from \eqref{3.25} that $\xi(u)=-\frac{1}{2n-1}\xi(2n\lambda+\Delta u)$ and therefore $Du=-\frac{1}{2n-1}D(2n\lambda+\Delta u)\xi$. Thus, equation \eqref{3.20} can be exhibited as
	 \begin{align}
	 \label{3.29}\nabla_X D(2n\lambda+\Delta u)=((2n\lambda+\Delta u)-2n)X.
	 \end{align}
	 According to Theorem~2 of Tashiro \cite{Tashiro} we obtain $M$ is locally isometric to the hyperbolic space $\mathbb{H}^{2n+1}$, when $M$ is complete. Since $\nabla_\xi \xi=0$ and $g(\xi,D(2n\lambda+\Delta u))=\xi(2n\lambda+\Delta u)$, we deduce from \eqref{3.29} that $\xi(\xi(2n\lambda+\Delta u))=(2n\lambda+\Delta u)-2n$. But, it is known \cite{Kenmotsu} that a $(2n+1)$-dimensional Kenmotsu manifold is locally isometric to the warped product $I\times_{ce^t} N^{2n}$, where $N$ is a K\"ahler manifold and $I$ is an open interval. Employing $\xi=\frac{\partial}{\partial t}$ (where $t$ denotes the coordinate on $I$) into \eqref{3.29} we obtain
	 \begin{align*}
	 \frac{d^2(2n\lambda+\Delta u)}{dt}=(2n\lambda+\Delta u)-2n.
	 \end{align*}
	 The solution of this can be exhibited as $(2n\lambda+\Delta u)=A\, cosh\,\,t+B\, sinh\,\,t+2n$, where $A$, $B$ constants on $M$.
	 \par Now, suppose that $r\neq -2n(2n+1)$ in some open set $\mathcal{O}$ of $M$, then from \eqref{3.28} we have $Du=\xi(u)\xi$, and this completes the proof.
 \end{proof}

\begin{rem}
	Theorem~1 in \cite{Ghosh2}, Theorem~1 in \cite{Ghosh3} and Theorem~3 in \cite{Ghosh} of Ghosh is a direct corollary of Theorem \ref{t3.2}, Theorem \ref{t3.3} and Theorem \ref{t3.5} respectively.
\end{rem}

Now we construct an example of Riemann soliton on 3-diemnsional Kenmotsu manifold, and verify our results.
\begin{ex}
	Let us indicate the canonical coordinates on $\mathbb{R}^3$ by $(x,y,z)$, and take
	the 3-dimensional manifold $M\subset\mathbb{R}^3$ defined by
	\begin{align*}
	M=\{(x,y,z)\in\mathbb{R}^3|z\neq0\}.
	\end{align*}
	We may easily verify that putting
	\begin{align*}
	\varphi\left(\frac{\partial}{\partial x}\right)=\frac{\partial}{\partial y},\quad \varphi\left(\frac{\partial}{\partial y}\right)=-\frac{\partial}{\partial x},\quad \varphi\left(\frac{\partial}{\partial z}\right)=0,\\
	\xi=\frac{\partial}{\partial z},\quad \eta=dz,\quad
	g=\frac{1}{2}exp(2z)(dx^2+dy^2)+dz^2,
	\end{align*}
	$(\varphi,\xi,\eta,g)$ is an almost contact metric structure on $M$. The matrix representation of $g$ with respect to $\frac{\partial}{\partial x}$, $\frac{\partial}{\partial y}$ and $\frac{\partial}{\partial z}$ is
	\begin{align*}
	(g_{ij})=\begin{pmatrix}
	\frac{1}{2}exp(2z) & 0 & 0 \\ 
	0 & \frac{1}{2}exp(2z) & 0 \\ 
	0 & 0 & 1
	\end{pmatrix}.
	\end{align*} Using the Koszul formula we have
	\begin{align}\label{3.33}
	\begin{gathered}
	\nabla_{\frac{\partial}{\partial x}}\frac{\partial}{\partial x}=-\frac{1}{2}exp(2z)\frac{\partial}{\partial z},\quad \nabla_{\frac{\partial}{\partial x}}\frac{\partial}{\partial y}=0,\quad \nabla_{\frac{\partial}{\partial x}}\frac{\partial}{\partial z}=\frac{\partial}{\partial x},\\
	\nabla_{\frac{\partial}{\partial y}}\frac{\partial}{\partial x}=0,\quad \nabla_{\frac{\partial}{\partial y}}\frac{\partial}{\partial y}=-\frac{1}{2}exp(2z)\frac{\partial}{\partial z},\quad \nabla_{\frac{\partial}{\partial y}}\frac{\partial}{\partial z}=\frac{\partial}{\partial y},\\
	\nabla_{\frac{\partial}{\partial z}}\frac{\partial}{\partial x}=\frac{\partial}{\partial x},\quad \nabla_{\frac{\partial}{\partial z}}\frac{\partial}{\partial y}=\frac{\partial}{\partial y},\quad \nabla_{\frac{\partial}{\partial x}}\frac{\partial}{\partial z}=0,
	\end{gathered}	
	\end{align}
	where $\nabla$ denotes the Levi-Civita connection of the Riemannian metric $g$. It is not hard to verify that the condition \eqref{3.1} for Kenmotsu manifold is satisfied. Hence, the manifold under consideration is a Kenmotsu manifold. By a straightforward calculation we have
	\begin{align}
	\nonumber &R\left(\frac{\partial}{\partial x},\frac{\partial}{\partial y}\right)\frac{\partial}{\partial z}=0,\quad R\left(\frac{\partial}{\partial x},\frac{\partial}{\partial z}\right)\frac{\partial}{\partial y}=0, \quad  R\left(\frac{\partial}{\partial x},\frac{\partial}{\partial z}\right)\frac{\partial}{\partial z}=-\frac{\partial}{\partial x},\\
	\nonumber &R\left(\frac{\partial}{\partial x},\frac{\partial}{\partial y}\right)\frac{\partial}{\partial x}=\frac{1}{2}exp(2z)\frac{\partial}{\partial y},\quad R\left(\frac{\partial}{\partial x},\frac{\partial}{\partial y}\right)\frac{\partial}{\partial y}=-\frac{1}{2}exp(2z)\frac{\partial}{\partial x},\\
	\nonumber &R\left(\frac{\partial}{\partial x},\frac{\partial}{\partial z}\right)\frac{\partial}{\partial x}=\frac{1}{2}exp(2z)\frac{\partial}{\partial z},\quad R\left(\frac{\partial}{\partial y},\frac{\partial}{\partial z}\right)\frac{\partial}{\partial x}=0,\\
	\label{3.34}&R\left(\frac{\partial}{\partial y},\frac{\partial}{\partial z}\right)\frac{\partial}{\partial y}=\frac{1}{2}exp(2z)\frac{\partial}{\partial z},\quad R\left(\frac{\partial}{\partial y},\frac{\partial}{\partial z}\right)\frac{\partial}{\partial z}=-\frac{\partial}{\partial y},
	\end{align}
	and applying these, we obtain
	\begin{align*}
	S\left(\frac{\partial}{\partial x},\frac{\partial}{\partial x}\right)=-exp(2z),\quad S\left(\frac{\partial}{\partial y},\frac{\partial}{\partial y}\right)=-exp(2z),\quad S\left(\frac{\partial}{\partial z},\frac{\partial}{\partial z}\right)=-2.
	\end{align*}
	Thus, the Ricci tensor satisfy  $S(X,Y)=-2ng(X,Y)$ for any $X,Y\in\Gamma(TM)$, that is, $M$ is Einstein. From \eqref{3.34}, we can easily shows that
	\begin{align}
	\label{3.35}R(X,Y)Z=-\{g(Y,Z)X-g(X,Z)Y\},
	\end{align}
	for any $X,Y,Z\in\Gamma(TM)$. Next consider a vector field 
	\begin{align}
	\label{3.36}V=a\left(y\frac{\partial}{\partial x}-x\frac{\partial}{\partial y}\right),
	\end{align}
	where $a\neq0$ is a constant. One can easily verify that $V$ has constant divergence. As a result of \eqref{3.33}, one can easily justify that
	\begin{align}
	\label{3.37}(\pounds_V g)(X,Y)=0,
	\end{align}
	for any $X,Y\in\Gamma(TM)$. Unifying \eqref{3.37} and \eqref{3.35}, we obtain that $g$ is a Riemann soliton, that is, \eqref{1.2} holds true with $V$ as in \eqref{3.36} and $\lambda=1$. Further \eqref{3.35} shows that $M$ is of constant negative curvature $-1$ and this verifies the Theorem \ref{t3.3}.
\end{ex}

\section{Riemann solitons and almost gradient Riemann solitons on $(\kappa,\mu)'$-almost Kenmotsu manifolds with $\kappa<-1$}
If the Reeb or characterstic vector field $\xi$ of an almost Kenmotsu manifold $M$ belonging to the $(\kappa,\mu)'$-nullity distribution (see \cite{Dileo Pastore2}), that is,
\begin{align}
\label{4.1}R(X,Y)\xi=\kappa\{\eta(Y)X-\eta(X)Y\}+\mu\{\eta(Y)h'X-\eta(X)h'Y\},
\end{align}
for some constants $\kappa$ and $\mu$, then $M$ is called $(\kappa,\mu)'$-almost Kenmotsu manifold. Classification of almost Kenmotsu manifold with $\xi$ belonging to the $(\kappa,\mu)'$-nulliy distribution have done by many geometers. For more detail, we refer to \cite{Dileo Pastore2,Prakasha, Vens3, Wang}. On $(\kappa,\mu)'$-almost Kenmotsu manifold, the following relation hold
\begin{align}
\label{4.2}h'^2=(\kappa+1)\varphi^2, &\quad \text{or} \quad h^2=(\kappa+1)\varphi^2,\\
&Q\xi=2n\kappa\xi.
\end{align}
Let $X\in Ker\eta$ be an eigenvector field of $h'$ orthogonal to $\xi$ with the corresponding eigenvalue $\theta$. As a result of \eqref{4.2}, we get $\theta^2=-(\kappa+1)$ and hence we have $\kappa\leq-1$. From \eqref{4.2} we remark that the tensor field $h'=0$ (equivalently, $h=0$) if and only if $\kappa=-1$, and $\kappa<-1$ if and only if $h'\neq 0$. As stated by Dileo and Pastore \cite{Dileo Pastore2} in Proposition 4.1 that, on a $(\kappa,\mu)'$-almost Kenmotsu manifold with $\kappa<-1$, we have $\mu=-2$. In this section, we paln to investigate the geometry of a Riemann soliton and gradient almost Riemann soliton on non-Kenmotsu $(\kappa,\mu)'$-almost Kenmotsu manifold. First, we reminisce the following result for our later use.
\begin{lem}\label{l4.1}
	In a $(\kappa,\mu)'$-almost Kenmotsu manifold $M$ with $\kappa<-1$, the Ricci operator satisfies
	\begin{align}
	\label{4.4}QX=-2nX+2n(\kappa+1)\eta(X)\xi-2nh',
	\end{align}
	where $h'\neq0$. Moreover, the scalar curvature $r$ of $M$ is $2n(\kappa-2n)$.
\end{lem}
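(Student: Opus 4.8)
The plan is to diagonalise everything against the eigenspaces of $h'$. By Proposition~4.1 of \cite{Dileo Pastore2} we have $\mu=-2$, and since $h'^2=(\kappa+1)\varphi^2$ with $\kappa<-1$, the symmetric operator $h'$ has eigenvalue $0$ along $\xi$ and the two constant eigenvalues $\pm\theta$, $\theta:=\sqrt{-(\kappa+1)}>0$, on rank-$n$ distributions $[\theta]'$ and $[-\theta]'$; these are interchanged by $\varphi$ (as $h'$ anticommutes with $\varphi$ by \eqref{2.4}) and, since $\pounds_\xi\varphi=2h$ forces $\nabla_\xi\varphi=0$ and hence $\nabla_\xi h'=0$, each is invariant under $\nabla_\xi$. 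Putting $Y=\xi$ in \eqref{4.1} gives at once the operator $\ell=R(\cdot,\xi)\xi$,
\begin{align*}
\ell X=R(X,\xi)\xi=\kappa\bigl(X-\eta(X)\xi\bigr)-2h'X ,
\end{align*}
so $\ell$ acts as the scalar $\kappa-2\theta=-(1+\theta)^2$ on $[\theta]'$, as $\kappa+2\theta=-(1-\theta)^2$ on $[-\theta]'$, and as $0$ on $\xi$; equivalently the sectional curvature $K(X,\xi)$ equals $-(1\pm\theta)^2$ according as $X\in[\pm\theta]'$.

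The remaining input is the behaviour of $R$ in the directions orthogonal to $\xi$, and here I would invoke the connection analysis of Dileo--Pastore. From $\nabla_X\xi=X-\eta(X)\xi+h'X$, $d\eta=0$, and the integrability of $[\theta]'$ and $[-\theta]'$, one obtains $\nabla_X\xi=(1\pm\theta)X$ on $[\pm\theta]'$, that $[\theta]'\oplus\langle\xi\rangle$ and $[-\theta]'\oplus\langle\xi\rangle$ are totally geodesic, and that for $X\in[\theta]'$ the map $\nabla_X$ carries $[-\theta]'$ into itself and $\xi$ into $[\theta]'$ (symmetrically with $\pm\theta$ exchanged). Using the latter, a short expansion of $R(X,Y)Y$ with $X\in[\theta]'$, $Y\in[-\theta]'$ shows its $[\theta]'$-component is $-(1+\theta)(1-\theta)X$, so the mixed sectional curvature is $K(X,Y)=\theta^2-1$. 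For two vectors of the same eigenspace one uses the Gauss equation on the totally geodesic leaves: the leaf of $[\theta]'\oplus\langle\xi\rangle$ carries the closed unit field $\xi$ with $\nabla_Z\xi=(1+\theta)(Z-\eta(Z)\xi)$ and is of constant curvature $-(1+\theta)^2$, and likewise $-(1-\theta)^2$ for the other family. Equivalently, one may read these curvatures off Dileo--Pastore's explicit local model: $M$ is locally isometric to $\mathbb{R}\ltimes\mathbb{R}^{2n}$ with left-invariant metric $g=dt^2+e^{2(1+\theta)t}\sum_{i=1}^n(dx^i)^2+e^{2(1-\theta)t}\sum_{j=1}^n(dy^j)^2$ and $\xi=\partial_t$, whose nonzero sectional curvatures on the coordinate planes are $-(1+\theta)^2$, $-(1-\theta)^2$ and $-(1+\theta)(1-\theta)$.

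With all sectional curvatures available I would contract over an orthonormal frame adapted to $TM=\langle\xi\rangle\oplus[\theta]'\oplus[-\theta]'$. This gives
\begin{align*}
S(\xi,\xi)=-n(1+\theta)^2-n(1-\theta)^2=-2n(1+\theta^2)=2n\kappa ,
\end{align*}
in agreement with \eqref{4.3}, while for unit $X\in[\theta]'$
\begin{align*}
S(X,X)=-(1+\theta)^2+(n-1)\bigl(-(1+\theta)^2\bigr)+n(\theta^2-1)=-2n(1+\theta) ,
\end{align*}
and symmetrically $S(X,X)=-2n(1-\theta)$ for unit $X\in[-\theta]'$. Since the operator $X\mapsto-2nX+2n(\kappa+1)\eta(X)\xi-2nh'X$ returns precisely these values on $\xi$, $[\theta]'$, $[-\theta]'$ (recall $\kappa=-1-\theta^2$), it must be $Q$, which is \eqref{4.4}. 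One more trace, together with $\mathrm{tr}\,h'=0$ from \eqref{2.4}, yields
\begin{align*}
r=\mathrm{tr}\,Q=2n\kappa+n\bigl(-2n(1+\theta)\bigr)+n\bigl(-2n(1-\theta)\bigr)=2n\kappa-4n^2=2n(\kappa-2n) .
\end{align*}

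The genuine obstacle is the middle paragraph: showing that the leaves of $[\pm\theta]'\oplus\langle\xi\rangle$ really have constant curvature $-(1\pm\theta)^2$, equivalently pinning down $R$ in the horizontal directions. Relation \eqref{4.1} alone is not enough — it controls only $R(\cdot,\xi)\xi$ — and one must use the full $(\kappa,\mu)'$-structure, i.e.\ Dileo--Pastore's formula for $\nabla_X h'$ and the local structure it entails. Granting that, the rest is a routine contraction.
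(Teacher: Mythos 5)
The paper does not actually prove this lemma: it is stated as a recalled result (``we reminisce the following result'') imported from the structure theory of Dileo and Pastore \cite{Dileo Pastore2} and Wang--Liu \cite{Wang Liu}, so there is no in-paper argument to compare against. Your reconstruction follows essentially the route those references take --- split $TM=\langle\xi\rangle\oplus[\theta]'\oplus[-\theta]'$, read $\ell$ off \eqref{4.1} with $\mu=-2$, get the remaining curvatures from the totally geodesic foliations and the explicit local model, and contract --- and all of your numerical checks are right: $\kappa-2\theta=-(1+\theta)^2$, the mixed curvature $\theta^2-1$, $S(\xi,\xi)=2n\kappa$, $S(X,X)=-2n(1\pm\theta)$ on $[\pm\theta]'$, and $r=2n\kappa-4n^2$. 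You are also right to flag that \eqref{4.1} alone cannot yield \eqref{4.4} and that the full $(\kappa,\mu)'$ structure (the formula \eqref{4.7} for $\nabla h'$ and the local model it forces) is the real input.

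Two points to tighten. First, the final step ``it must be $Q$'' does not follow from the diagonal values alone: knowing $S(e_i,e_i)$ on an adapted orthonormal frame determines only the diagonal of $Q$, and you must also check that $S(X,\xi)=0$ for $X\perp\xi$, that $S(X,Y)=0$ for $X\in[\theta]'$, $Y\in[-\theta]'$, and that $S$ restricted to each eigendistribution is a multiple of $g$ rather than merely having constant diagonal. All of this does hold --- it can be read off the multiply warped product model you quote (whose Ricci tensor is block diagonal with Einstein blocks), or deduced from the fact that $Q$ commutes with $h'$ and with $\varphi$ on $\ker\eta$ --- but it needs to be said, since the whole content of \eqref{4.4} is that $Q$ is a polynomial in $h'$ and $\eta\otimes\xi$. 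Second, the inference ``$\nabla_\xi\varphi=0$ and hence $\nabla_\xi h'=0$'' is too quick: since $h'=h\circ\varphi$, vanishing of $\nabla_\xi\varphi$ gives $\nabla_\xi h'=(\nabla_\xi h)\varphi$, and one still needs $\nabla_\xi h=0$; the clean way is to set $X=\xi$ in \eqref{4.7}, which gives $\nabla_\xi h'=0$ directly. Neither issue changes the outcome, but both should be patched before the argument counts as a proof rather than a verification.
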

Now, we prove the following outcome.
\begin{thm}\label{t4.2}
	Let $M$ be a non-Kenmotsu $(\kappa,\mu)'$-almost Kenmotsu manifold. If metric of $M$ is a Riemann soliton with $divV=constant$, then either the manifold is locally isometric to the Reimannian product $\mathbb{H}^{n+1}(-4)\times \mathbb{R}^n$ or the potential vector field $V$ is strict infinitesimal contact transformation. 
\end{thm}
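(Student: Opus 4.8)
The plan is to mimic the argument of Section~3, pushing the soliton equation first through $\pounds_V\nabla$ and then through the $(\kappa,\mu)'$-nullity relation \eqref{4.1}. Contracting \eqref{3.7} over the first and last slots gives, exactly as in \eqref{3.8},
\begin{align*}
(\pounds_V g)(Y,Z)+\tfrac{2}{2n-1}S(Y,Z)+\tfrac{2}{2n-1}(2n\lambda+\mathrm{div}\,V)g(Y,Z)=0,
\end{align*}
and, since $\mathrm{div}\,V$ is constant, a covariant derivative followed by \eqref{2.6} yields the analogue of \eqref{3.10},
\begin{align*}
g((\pounds_V\nabla)(X,Y),Z)=\tfrac{1}{2n-1}\{(\nabla_Z S)(X,Y)-(\nabla_X S)(Y,Z)-(\nabla_Y S)(Z,X)\}.
\end{align*}
The first key point is that $(\pounds_V\nabla)(X,\xi)=0$. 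I would prove this from Lemma~\ref{l4.1} together with the identity $\nabla_\xi h'=0$, which holds on a $(\kappa,\mu)'$-almost Kenmotsu manifold with $\kappa<-1$ (compare $R(X,\xi)\xi$ computed via \eqref{2.3}--\eqref{2.4} with \eqref{4.1}, using $\mu=-2$): then $(\nabla_\xi Q)=0$, and a direct computation using $Q\xi=2n\kappa\xi$ gives $(\nabla_Z S)(X,\xi)=2n(\kappa+2)g(h'X,Z)=(\nabla_X S)(\xi,Z)$, so the three terms above cancel for $Y=\xi$.

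Inserting $Y=\xi$ in \eqref{2.8} now gives $\nabla_X(\pounds_V\xi)=(\pounds_V h')X-(\pounds_V\eta)(X)\xi-\eta(X)\pounds_V\xi$, while \eqref{2.7} with $Z=\xi$, together with $(\pounds_V\nabla)(\cdot,\xi)=0$ and the symmetry of $\pounds_V\nabla$, gives $(\pounds_V R)(X,Y)\xi=(\pounds_V\nabla)(X,h'Y)-(\pounds_V\nabla)(Y,h'X)$, hence $(\pounds_V R)(X,\xi)\xi=0$. The contracted equation at $Z=\xi$ reads $(\pounds_V g)(X,\xi)=c\,\eta(X)$ with $c:=-\tfrac{2}{2n-1}(2n\kappa+2n\lambda+\mathrm{div}\,V)$ constant; Lie-differentiating $g(X,\xi)=\eta(X)$ and $g(\xi,\xi)=1$ then produces $(\pounds_V\eta)(X)=c\,\eta(X)+g(X,\pounds_V\xi)$ and $\eta(\pounds_V\xi)=-c/2$. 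Applying $\pounds_V$ to $R(X,\xi)\xi=\kappa(X-\eta(X)\xi)+\mu h'X$, and simplifying with \eqref{4.1}, the expressions for $R(X,\xi)\pounds_V\xi$ and $R(X,\pounds_V\xi)\xi$ coming from \eqref{4.1}, and $(\pounds_V R)(X,\xi)\xi=0$, then yields the closed formula
\begin{align*}
(\pounds_V h')X=\tfrac{\kappa c}{2}\bigl(X-\eta(X)\xi\bigr)-c\,h'X-\eta(X)\,h'(\pounds_V\xi)-g(h'(\pounds_V\xi),X)\,\xi.
\end{align*}

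Taking the trace of the last display and using the general identity $\mathrm{tr}(\pounds_V h')=V(\mathrm{tr}\,h')=0$ gives $n\kappa c=0$, hence $c=0$ since $\kappa<-1$. Consequently $\pounds_V\eta=g(\pounds_V\xi,\cdot)$, $\eta(\pounds_V\xi)=0$, $(\pounds_V h')X=-\eta(X)h'(\pounds_V\xi)-g(h'(\pounds_V\xi),X)\xi$, and, writing $W:=\pounds_V\xi$ and $\sigma:=(I+h')W$,
\begin{align*}
\nabla_X W=-\eta(X)\,\sigma-g(\sigma,X)\,\xi.
\end{align*}

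If $W\equiv0$ on $M$, then $\pounds_V\eta=0$, i.e.\ $V$ is a strict infinitesimal contact transformation, which is one alternative. Otherwise I would decompose $W=W_++W_-$ along the eigenspaces of $h'$ for the eigenvalues $\pm\theta$, $\theta:=\sqrt{-(\kappa+1)}>0$; differentiating $W_\pm=\tfrac12\bigl(W\pm\tfrac1\theta h'W\bigr)$ and using the last display together with the relation $(\nabla_X h')W=\bigl((\kappa+1)g(W,X)-g(h'W,X)\bigr)\xi$ (obtained by computing $R(X,\xi)W$ from $\nabla W$ and comparing with \eqref{4.1} via $g(R(X,\xi)Y,Z)=-g(R(Y,Z)\xi,X)$) gives $\nabla_X W_\pm=-(1\pm\theta)\bigl(\eta(X)W_\pm+g(W_\pm,X)\xi\bigr)$. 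Feeding these into the curvature operator and matching against the curvature of a non-Kenmotsu $(\kappa,\mu)'$-almost Kenmotsu manifold---for which one needs the explicit covariant derivative of $h'$ and the local description of such manifolds from \cite{Dileo Pastore2}, not merely \eqref{4.1} and Lemma~\ref{l4.1}---forces $\kappa=-2$, and then $h'\neq0$ makes $M$ locally isometric to $\mathbb{H}^{n+1}(-4)\times\mathbb{R}^n$. This last step---controlling $\pounds_V h'$ and running the eigenspace/curvature analysis---is the main obstacle; everything preceding it is a routine adaptation of Section~3.
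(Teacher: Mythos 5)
Your argument tracks the paper closely through its first half, and everything you do there checks out: the contraction of the soliton equation, the identity $(\pounds_V\nabla)(X,\xi)=0$ (which is exactly the paper's formula $(\pounds_V\nabla)(X,Y)=-\tfrac{4n}{2n-1}(\kappa+2)g(h'X,Y)\xi$ evaluated at $Y=\xi$), the consequence $(\pounds_V R)(X,\xi)\xi=0$, the relations $(\pounds_V\eta)(X)=c\,\eta(X)+g(X,\pounds_V\xi)$ and $\eta(\pounds_V\xi)=-c/2$, and your closed formula for $(\pounds_V h')X$ obtained by Lie-differentiating $R(X,\xi)\xi=\kappa(X-\eta(X)\xi)+\mu h'X$. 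Your trace argument $\mathrm{tr}(\pounds_V h')=V(\mathrm{tr}\,h')=0\Rightarrow n\kappa c=0\Rightarrow c=0$ is a clean alternative to the paper's route to the same identity $2n\lambda+\mathrm{div}\,V+2n\kappa=0$ (the paper gets it by setting $Y=Z=\xi$ in its formula for $\pounds_V h'$).

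The genuine gap is the final dichotomy, which you yourself flag as ``the main obstacle.'' After $c=0$ your only constraint on $W=\pounds_V\xi$ is $(\pounds_V h')X=-\eta(X)h'W-g(h'W,X)\xi$, and this is satisfied by \emph{any} $W$ orthogonal to $\xi$: it carries no information forcing $\kappa=-2$ or $W=0$. (Indeed, your proposed check via $g(R(X,Y)W,\xi)$ computed from $\nabla W$ against $-g(R(X,Y)\xi,W)$ from \eqref{4.1} is an identity — I verified it gives $0=0$ — so the $\xi$-components of the curvature yield nothing new, and the full eigenspace/curvature matching you sketch is left entirely open.) What closes the argument in the paper is a \emph{second}, independent expression for $\pounds_V h'$, which you never extract: using the full formula for $(\pounds_V\nabla)(X,Y)$ (not just its $Y=\xi$ slice) in \eqref{2.7} and contracting gives $(\pounds_V S)(Y,Z)=-\tfrac{8n^2}{2n-1}(\kappa+2)g(h'Y,Z)$; comparing this with the direct Lie derivative of $S=-2ng+2n(\kappa+1)\eta\otimes\eta-2ng(h'\cdot,\cdot)$ isolates $\pounds_V h'$ a second time and yields $(\pounds_V h')Y=(\kappa+1)\{\eta(Y)W+g(W,Y)\xi\}$. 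Matching this against your formula (or setting $Y=\xi$ in it) forces $h'W=-(\kappa+1)W$, and then $h'^2=(\kappa+1)\varphi^2$ gives $(\kappa+1)^2W=h'(h'W)=-(\kappa+1)W$, i.e.\ $(\kappa+2)W=0$ — exactly the alternative ``$\kappa=-2$ or $\pounds_V\xi=0$'' that the theorem asserts. Without this second expression (or a completed version of your curvature analysis), the proof is not done.
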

\begin{proof}
	Contracting the equation \eqref{1.2} over $X$ and $W$, one can get
	\begin{align*}
	(\pounds_V g)(Y,Z)+\frac{2}{2n-1}S(Y,Z)+\frac{2}{2n-1}(2n\lambda+div V)g(Y,Z)=0.
	\end{align*}
	As a result of Lemma \ref{l4.1}, the above equation transform into
	\begin{align}
	\nonumber (\pounds_Vg)(Y,&Z)+\frac{2}{2n-1}\{(2n\lambda+div V-2n)g(Y,Z)\\
	\label{4.5}&+2n(\kappa+1)\eta(Y)\eta(Z)-2ng(h'Y,Z)\}=0.
	\end{align}
	Differentiating \eqref{4.5} covariantly along $X$, recalling \eqref{2.3} and $V$ has a constant divergence, we have
	\begin{align}
	\nonumber(\nabla_X\pounds_Vg)(Y,Z)=-&\frac{4n}{2n-1}(\kappa+1)\{g(X+h'X,Y)\eta(Z)+g(X+h'X,Z)\eta(Y)\\
	\label{4.6}&-2\eta(X)\eta(Y)\eta(Z)\}+\frac{4n}{2n-1}g((\nabla_Xh')Y,Z).
	\end{align}
	In \cite{Dileo Pastore2} Dileo and Pastore proved that on $M$ there holds 
	\begin{align}
	\nonumber g((\nabla_Xh')Y,Z)=g&((\kappa+1)X-h'X,Y)\eta(Z)+\eta(Y)g((\kappa+1)X-h'X,Z)\\
	\label{4.7}&-2(\kappa+1)\eta(X)\eta(Y)\eta(Z).
	\end{align}
	Making use of \eqref{4.6} into \eqref{2.6}, we entails that
	\begin{align}
	\label{4.8}(\pounds_V\nabla)(X,Y)=-\frac{4n}{2n-1}(\kappa+2)g(h'X,Y)\xi,
	\end{align}
	where we applied \eqref{4.7}. Taking covariant differentiation of \eqref{4.8} we get
	\begin{align*}
	\nonumber(\nabla_Y\pounds_V\nabla)(X,Z)=&-\frac{4n}{2n-1}(\kappa+2)g((\nabla_Yh')Y,Z)\xi\\
	&-\frac{4n}{2n-1}(\kappa+2)g(h'X,Z)(Y-\eta(Y)\xi+h'Y).
	\end{align*}
	Putting the foregoing equation into \eqref{2.7} yields
	\begin{align}
	\nonumber (\pounds_V &R)(X,Y)Z=\frac{4n}{2n-1}\{g((\nabla_Yh')X,Z)\xi-g((\nabla_Xh')Y,Z)\xi\\
	\label{4.9}&+g(h'X,Z)(Y-\eta(Y)\xi+h'Y)-g(h'Y,Z)(X-\eta(X)\xi+h'X)\}.
	\end{align}
	Contracting the equation \eqref{4.9} with respect to $X$ and recalling \eqref{4.7} we easily obtain
	\begin{align}
	\label{4.10}(\pounds_VS)(Y,Z)=-\frac{8n^2}{2n-1}(\kappa+2)g(h'Y,Z).
	\end{align}
	From \eqref{4.4}, the Ricci tensor can be written as
	\begin{align*}
	S(Y,Z)=-2ng(Y,Z)+2n(\kappa+1)\eta(Y)\eta(Z)-2ng(h'Y,Z).
	\end{align*}
	Lie-derivative of this equation along potential vector field $V$, utilization of \eqref{2.3} and \eqref{4.5} yields
	\begin{align}
	\nonumber (\pounds_V S)(Y,Z)&=\frac{4n}{2n-1}\{(2n\lambda+div V+2n\kappa)g(Y,Z)-2(\kappa+1)(2n\lambda+div V\\
	\nonumber &+2n\kappa)\eta(Y)\eta(Z)+(2n\lambda+div V-4n)g(h'Y,Z)\}+2n(\kappa+1)\\
	\label{4.11}&\{\eta(Y)g(\pounds_V\xi,Z)+\eta(Z)g(\pounds_V\xi,Y)\}-2ng((\pounds_Vh')Y,Z).
	\end{align}
	Putting \eqref{4.11} together with \eqref{4.10}, we get
	\begin{align}
	\nonumber g((\pounds_Vh')Y,Z)=&\frac{2}{2n-1}\{(2n\lambda+div V+2n\kappa)g(Y,Z)-2(\kappa+1)(2n\lambda+div V\\
	\nonumber &+2n\kappa)\eta(Y)\eta(Z)+(2n(\kappa+2)+2n\lambda+div V-4n)g(h'Y,Z)\}\\
	\label{4.12}&+(\kappa+1)\{\eta(Y)g(\pounds_V\xi,Z)+\eta(Z)g(\pounds_V\xi,Y)\}.
	\end{align}
	Note that by switching $Y=Z=\xi$ in \eqref{4.5} we obtain $\eta(\pounds_V\xi)-\frac{1}{2n-1}(2n\lambda+div V+2n\kappa)=0$. Applying this equation and substituting $Y=Z=\xi$ in \eqref{4.12} one can get $(2n\lambda+div V+2n\kappa)=0$ by reason of $\kappa<-1$ and eventually we have
	\begin{align*}
	(\pounds_Vh')Y=(\kappa+1)\{\eta(Y)\pounds_V\xi+g(\pounds_V\xi,Y)\xi\}.
	\end{align*}
	Inserting $Y$ by $\xi$ in the foregoing equation and utilization of $(2n\lambda+\Delta u+2n\kappa)=0$ gives
	\begin{align}
	\label{4.13}h'\pounds_V\xi=-(\kappa+1)\pounds_V\xi.
	\end{align}
	In view of \eqref{4.2}, $(2n\lambda+div V+2n\kappa)=0$ and $\kappa<-1$, the action of $h'$ on the above equation gives $h'\pounds_V\xi=\pounds_V\xi$. This together with \eqref{4.13} yields 
	\begin{align*}
	(\kappa+2)\pounds_V\xi=0.
	\end{align*}
	\par Suppose $\kappa=-2$, then it follows from Proposition 4.1 and Corollary 4.2 of Dielo and Pastore \cite{Dileo Pastore2} that a non-Kenmotsu $(\kappa,\mu)'$-almost Kenmotsu manifold is locally isometric to the Riemannian product $\mathbb{H}^{n+1}(-4)\times \mathbb{R}^n$. If $\kappa\neq-2$, then we have $\pounds_V\xi=0$. As a result of $2n\lambda+div V+2n\kappa=0$ and $(\pounds_Vg)(X,\xi)=0$ we obtain
	\begin{align*}
	(\pounds_V\eta)X=(\pounds_V g)(X,\xi)+g(X,\pounds_V\xi)=0.
	\end{align*}
	This means potential vector field $V$ is strict infinitesimal contact transformation. This finishes the proof.
\end{proof}
As a result of Lemma \ref{l4.1}, the relation \eqref{3.20} and \eqref{3.19}, we obtain the following fruitful result.
\begin{thm}
	Let  $M$ be a non-Kenmotsu $(\kappa,\mu)'$-almost Kenmotsu manifold which admits a gradient almost Riemann soliton. Then, the soliton is expanding with $\lambda=\frac{6n-2}{2n-1}$ and $M$ is locally isometric to the Riemannian product $\mathbb{H}^{n+1}(-4)\times\mathbb{R}^n$. Moreover, the potential vector field is
	tangential to the Euclidean factor $\mathbb{R}^n$.
\end{thm}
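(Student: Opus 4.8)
The plan is to follow the pattern of Theorems~\ref{t3.5} and \ref{t4.2}: convert the gradient almost Riemann soliton equation into a curvature identity for $Du$, push the $(\kappa,\mu)'$-structure through it until $\kappa$ is forced, and then read off $\lambda$ and the position of $Du$. Throughout put $\psi:=2n\lambda+\Delta u$. First, using Lemma~\ref{l4.1} together with the formula \eqref{4.7} for $\nabla h'$, a direct computation gives $(\nabla_YQ)X=2n(\kappa+2)\{g(h'X,Y)\xi+\eta(X)h'Y\}$, hence $(\nabla_YQ)X-(\nabla_XQ)Y=2n(\kappa+2)\{\eta(X)h'Y-\eta(Y)h'X\}$; substituting into \eqref{3.19} yields
\begin{align}\label{pln1}
R(X,Y)Du=\frac{2n(\kappa+2)}{2n-1}\{\eta(X)h'Y-\eta(Y)h'X\}+\frac{1}{2n-1}\{Y(\psi)X-X(\psi)Y\}.
\end{align}
Contracting \eqref{3.19} over $X$ and using that $r=2n(\kappa-2n)$ is constant (so $div\,Q=\tfrac{1}{2}Dr=0$) gives $S(Y,Du)=\frac{2n}{2n-1}Y(\psi)$, which, after rewriting $S(Y,Du)$ by means of Lemma~\ref{l4.1}, becomes the vector identity
\begin{align}\label{pln2}
-Du+(\kappa+1)\xi(u)\xi-h'Du=\tfrac{1}{2n-1}D\psi .
\end{align}

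Next I would set $Y=\xi$ in \eqref{pln1}, take the inner product with $\xi$, and insert $R(X,\xi)\xi=\kappa(X-\eta(X)\xi)-2h'X$ (from \eqref{4.1}, since $\mu=-2$). Combining the result with \eqref{pln2} should collapse everything to the pointwise relation $h'Du=(\kappa+1)(Du-\xi(u)\xi)$. Applying $h'$ again and invoking $h'^2=(\kappa+1)\varphi^2$ from \eqref{4.2} then gives $(\kappa+1)(\kappa+2)(Du-\xi(u)\xi)=0$, so, since $\kappa<-1$, either $\kappa=-2$ or $Du=\xi(u)\xi$. The latter is excluded by substituting $Du=\xi(u)\xi$ into \eqref{3.20} and comparing $h'$-eigencomponents: one obtains $\xi(u)=\tfrac{2n}{2n-1}$ (hence constant) and $\psi=0$, whereupon the $\xi$-component forces $\kappa=0$, a contradiction. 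Therefore $\kappa=-2$, and by Proposition~4.1 and Corollary~4.2 of Dileo--Pastore \cite{Dileo Pastore2}, $M$ is locally isometric to $\mathbb{H}^{n+1}(-4)\times\mathbb{R}^{n}$, the eigendistribution of $h'$ for the eigenvalue $-1$ being the tangent distribution of the Euclidean factor.

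To finish, with $\kappa=-2$ the relation $h'Du=(\kappa+1)(Du-\xi(u)\xi)$ says that $W:=Du-\xi(u)\xi$ satisfies $h'W=-W$ and $\eta(W)=0$. Computing $\nabla_YW$ from \eqref{3.20} and imposing the consistency condition $\nabla_Y(h'W)=-\nabla_YW$ (expanded with \eqref{4.7}) yields $\psi=4n-2(2n-1)\xi(u)$ together with $D(\xi(u))=2\xi(u)\xi$. Putting these into \eqref{pln2} (with $\kappa=-2$) gives $D\psi=-2(2n-1)\xi(u)\xi$, while differentiating $\psi=4n-2(2n-1)\xi(u)$ gives $D\psi=-4(2n-1)\xi(u)\xi$; comparison forces $\xi(u)=0$. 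Thus $Du=W$ is tangent to $\mathbb{R}^{n}$ and $\psi=4n$ is constant. Finally, with $\xi(u)=0$ and $\psi=4n$ the formula for $\nabla_YW$ reduces to $\nabla_YW=\tfrac{2n}{2n-1}(h'Y-Y+\eta(Y)\xi)$, so $\Delta u=div\,W=\tfrac{2n}{2n-1}(tr\,h'-2n)=-\tfrac{4n^2}{2n-1}$; hence $2n\lambda=4n-\Delta u=\tfrac{4n(3n-1)}{2n-1}$, i.e.\ $\lambda=\tfrac{6n-2}{2n-1}>0$ and the soliton is expanding.

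The delicate step is the middle one: distilling the clean algebraic identity $h'Du=(\kappa+1)(Du-\xi(u)\xi)$ out of \eqref{pln1} and \eqref{pln2}, where several structural terms must conspire to cancel, and then disposing of the degenerate alternative $Du=\xi(u)\xi$. Once $\kappa=-2$ is secured the factor $\kappa+2$ annihilates the curvature-specific term of \eqref{pln1}, and the remaining work is essentially bookkeeping with the scalar functions $\xi(u)$ and $\psi$.
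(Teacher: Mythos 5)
Your argument is correct and, in its main skeleton, coincides with the paper's proof: the same curvature identity for $R(X,Y)Du$ obtained from Lemma~\ref{l4.1}, \eqref{4.7} and \eqref{3.19}, the same contraction giving \eqref{4.18}, the same comparison with the $(\kappa,\mu)'$-identity \eqref{4.1} leading to $(\kappa+2)(Du-\xi(u)\xi)=0$, and the same exclusion of the degenerate case $Du=\xi(u)\xi$ via \eqref{3.20} and \eqref{4.4}. Your intermediate relation $h'Du=(\kappa+1)(Du-\xi(u)\xi)$ is not literally the paper's \eqref{4.19}, but it does follow from combining \eqref{4.17} and \eqref{4.18} once one first takes the $\eta$-component to eliminate $\xi(2n\lambda+\Delta u)$; applying $h'$ and $h'^2=(\kappa+1)\varphi^2$ then yields the same dichotomy, so this is a harmless variant. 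Where you genuinely deviate is the endgame. The paper expands $Du$ in a local $\varphi$-frame, substitutes into \eqref{3.20}, and invokes $\nabla_{E_j}\varphi E_i\in[-1]'$ from Dileo--Pastore to extract \eqref{4.26}--\eqref{4.27}; you instead differentiate the eigenvector equation $h'W=-W$ for $W=Du-\xi(u)\xi$, observe that \eqref{4.7} forces $(\nabla_Y h')W=0$ (because $g(Y+h'Y,W)=0$ when $h'W=-W$), and read off the scalar constraints $2n\lambda+\Delta u=4n-2(2n-1)\xi(u)$ and $D(\xi(u))=2\xi(u)\xi$ from the requirement that $\nabla_YW$ remain in the $[-1]'$-eigendistribution; comparing the two resulting expressions for $D(2n\lambda+\Delta u)$ then forces $\xi(u)=0$. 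I verified these intermediate identities, and your final bookkeeping $\Delta u=-4n^2/(2n-1)$, hence $\lambda=(6n-2)/(2n-1)>0$, agrees with the paper. Your endgame is somewhat more self-contained, replacing the frame computation and the appeal to Proposition~4.1 of Dileo--Pastore by the single observation that the eigendistribution condition on $Du$ propagates under covariant differentiation; the paper's route, in exchange, makes the geometric role of the product splitting more visible.
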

\begin{proof}
	With the help of \eqref{4.4} we have
	\begin{align*}
	\nonumber (\nabla_Y Q)X-(\nabla_X Q)Y=&-2n((\nabla_Yh')X-(\nabla_Xh')Y)\\
	&-2n(\kappa+1)(\eta(Y)(X+h'X)-\eta(X)(Y+h'Y)).
	\end{align*}
	Utilization of the above equation in \eqref{3.19} gives
	\begin{align}
	\nonumber R(X,Y)Du=&-\frac{2n}{2n-1}\{((\nabla_Yh')X-(\nabla_Xh')Y)\\
	\nonumber &+(\kappa+1)(\eta(Y)(X+h'X)-\eta(X)(Y+h'Y))\}\\
	\label{4.14}&+\frac{1}{2n-1}\{Y(2n\lambda+\Delta u)X-X(2n\lambda+\Delta u)Y\}
	\end{align}
	Using $\xi$ in place of $X$ in \eqref{4.14} we get an equation, then taking inner product of the resulting equation with $\xi$ gives
	\begin{align}
	\label{4.15}g(R(\xi,Y)Du,\xi)=\frac{1}{2n-1}\{Y(2n\lambda+\Delta u)-\xi(2n\lambda+\Delta u)\eta(Y)\}.
	\end{align}
	As a result of \eqref{4.1}, we get
	\begin{align}
	\label{4.16}g(R(\xi,Y)\xi,Du)=\kappa\{\xi(u)\eta(Y)-Y(u)\}+2g(h'Du,Y),
	\end{align}
	where we used $\mu=-2$. Comparing \eqref{4.15} with \eqref{4.16} yields that
	\begin{align}
	\label{4.17}\frac{1}{2n-1}D(2n\lambda+\Delta u)=\kappa Du-\kappa \xi(u)\xi-2h'Du+\frac{1}{2n-1}\xi(2n\lambda+\Delta u)\xi.
	\end{align}
	According to Corollary 4.1 of Delio and Pastore \cite{Dileo Pastore2}, and Lemma 3.4 of Wang and Liu \cite{Wang Liu}, we have that $tr(\nabla_X h')=0$ and $(div h')X=2n(\kappa+1)\eta(X)$. Applying these equation and contracting \eqref{4.14} over $X$ gives that
	\begin{align*}
	S(Y,Du)=\frac{2n}{2n-1}Y(2n\lambda+\Delta u).
	\end{align*}
	As a result of \eqref{4.4}, the above equation transform into
	\begin{align}
	\label{4.18}\frac{1}{2n-1}D(2n\lambda+\Delta u)=-Du+(\kappa+1)\xi(u)\xi-h'Du.
	\end{align}
	By virtue of \eqref{4.17} and \eqref{4.18} we obtain
	\begin{align*}
	(\kappa+1)Du-(2\kappa+1)\xi(u)\xi-h'Du+\frac{1}{2n-1}\xi(2n\lambda+\Delta u)=0.
	\end{align*}
	Remembering the assumption $\kappa<-1$ and equation \eqref{4.2}, the action of $h'$ on above equation gives that 
	\begin{align}
	\label{4.19}h'Du=-Du+\xi(u)\xi.
	\end{align}
	Again making use of \eqref{4.2}, the action $h'$ on \eqref{4.19} yields $(\kappa+1)(Du-\xi(u)\xi)=h'Du$. This together with \eqref{4.19} gives that
	\begin{align}
	\label{a}(\kappa+2)(Du-\xi(u)\xi)=0.
	\end{align}
	Thus, we have either $\kappa=-2$ or $Du=\xi(u)\xi$. Now, we prove that the second case can not occur. In fact, if we assume that the second case is true, that is, $Du=\xi(u)\xi$, then covariant derivative of this along $X$ gives
	\begin{align*}
	\nabla_X Du=X(\xi(u))\xi+\xi(u)\{X-\eta(X)\xi+h'X\}.
	\end{align*}
	With the aid of above equation, \eqref{3.20} becomes
	\begin{align*}
	\frac{1}{2n-1}QX=-(\xi(u)+\frac{1}{2n-1}(2n\lambda+\Delta u))X+(\xi(u)\eta(X)-X(\xi(u)))\xi-\xi(u)h'X.
	\end{align*}
	Utilization of \eqref{4.4} in the foregoing equation furnishes
	\begin{align}
	\nonumber\left(\xi(u)+\frac{1}{2n-1}(2n\lambda+\Delta u-2n)\right)X+\left(\xi(u)-\frac{2n}{2n-1}\right)h'X\\
	\label{4.20}+\left(\frac{2n}{2n-1}(\kappa+1)\eta(X)+X(\xi(u))-\xi(u)\eta(X)\right)\xi=0.
	\end{align}
	Contracting \eqref{4.20} with respect to $X$ and recalling \eqref{2.4} we obtain
	\begin{align*}
	2n(\kappa+1)\left(\xi(u)-\frac{2n}{2n-1}\right)=0,
	\end{align*}
	and this exhibits that $\xi(u)=\frac{2n}{2n-1}$ because of $\kappa<-1$. Putting $\xi(u)=\frac{2n}{2n-1}$ into \eqref{4.20} we obatin
	\begin{align}
	\label{4.21}(2n\lambda+\Delta u)X+2n\kappa\eta(X)\xi=0.
	\end{align}
	Considering $X$ in \eqref{4.21} is orthogonal to $\xi$, we have $2n\lambda+\Delta u=0$. Thus, \eqref{4.21} becomes $2n\kappa\eta(X)\xi=0$. It follows that $\kappa=0$, this contradicts our assumption $\kappa<-1$. Therefore, we obtain from \eqref{a} that $\kappa=-2$. In view of $\kappa=-2=\mu$, on the basis of results of Dileo and Pastore \cite{Dileo Pastore2} we obtain that $M$ is locally isometric to the Riemannian product $\mathbb{H}^{n+1}(-4)\times\mathbb{R}^n$. Employing $\kappa=-2$ and \eqref{4.19} in \eqref{4.17} we obtain
	\begin{align}
	\label{4.23}D(2n\lambda+\Delta u)=\xi(2n\lambda+\Delta u).
	\end{align}
	Utilizing \eqref{4.23} and \eqref{4.19} in \eqref{4.18} provides
	\begin{align}
	\label{4.24}\xi\left(\frac{1}{2n-1}(2n\lambda+\Delta u)+2u\right)=0.
	\end{align}
	In the framework of \eqref{4.4}, \eqref{3.20} and \eqref{4.24} we acquire that
	\begin{align}
	\label{4.25}2n\lambda+\Delta u=4n+\frac{1}{2}\xi(\xi(2n\lambda+\Delta u)).
	\end{align}
	\par From \eqref{4.2} we have $h'^2=-\varphi^2$. Now, we shall indicate by $[1]'$ and $[-1]'$ the distribution of the eigen vectors of $h'$ orthogonal to $\xi$ with eigen values $1$ and $-1$ respectively, and also we may consider a local orthonormal $\varphi$-frame $\{E_i,\varphi E_i,\xi\}_{i=1}^{n}$ with $E_i\in[1]'$ and $\varphi E_i\in [-1]'$. As a result of \eqref{4.19}, we easily find that $Du$ has no components on the distribution $[1]'$. Thus, we write $Du=\sum_{i=n}^n\gamma_i\varphi E_i+\xi(u)\xi$, where $\{\gamma_i\}_{i=1}^{n}$ are smooth function on $M$. Applying this and \eqref{4.24} in \eqref{3.20} we obtain that
	\begin{align*}
	\frac{1}{2n-1}QX=&\frac{1}{2n-1}\left(\frac{1}{2}\xi(2n\lambda+\Delta u)-(2n\lambda+\Delta u)\right)X-\sum_{i=1}^n X(\gamma_i)\varphi E_i\\
	&-\sum_{i=1}^n\gamma_i \nabla_X\varphi E_i+\frac{1}{2(2n-1)}\xi(2n\lambda+\Delta u)h'X\\
	&+\frac{1}{2(2n-1)}\left(X(\xi(2n\lambda+\Delta u))-\xi(2n\lambda+\Delta u)\eta(X)\right)\xi.
	\end{align*}
	By virtue of \eqref{4.4}, the above equation transform into
	\begin{align}
	\nonumber \frac{1}{2n-1}&\left(\frac{1}{2}\xi(2n\lambda+\Delta u)-(2n\lambda+\Delta u)+2n\right)X-\sum_{i=1}^n X(\gamma_i)\varphi E_i-\sum_{i=1}^n\gamma_i\nabla_X\varphi E_i\\
	\nonumber &+\frac{1}{2n-1}\left(\frac{1}{2}\xi(2n\lambda+\Delta u)+2n\right)h'X+\frac{1}{2(2n-1)}\{X(\xi(2n\lambda+\Delta u))\\
	\label{4.26}&-\xi(2n\lambda+\Delta u)\eta(X)+4n\eta(X)\}\xi=0.
	\end{align}
	According to the proof of Proposition 4.1 of \cite{Dileo Pastore2}, we have that $\nabla_{E_j}\varphi E_i\in[-1]'$ for any $E_i\in[1]'$, $1\leq j\leq n$. Consequently, using $E_j\in[1]'$ in place of $X$ in \eqref{4.26} provides
	\begin{align}
	\label{4.27}(2n\lambda+\Delta u)=\xi(2n\lambda+\Delta u)+4n.
	\end{align}
	Unifying \eqref{4.25} with \eqref{4.27} yields that $2n\lambda+\Delta u=4n$. Now, contracting \eqref{3.20} over $X$ gives
	\begin{align*}
	\Delta u=-\frac{4n^2}{2n-1},
	\end{align*}
	where we applied $\kappa=-2$ and $r=2n(\kappa-2n)$.	Combining the above equation with $2n\lambda+\Delta u=4n$ yields that $\lambda=\frac{6n-2}{2n-1}>0$, and this means that the gradient almost Riemann soliton is expanding. Eventually, utilization of $2n\lambda+\Delta u=4n$ in \eqref{4.24} gives $\xi(u)=0$ and thus from \eqref{4.19} we have $h'Du=-Du$. According to Theorem 4.2 of \cite{Dileo Pastore2}, the factor $\mathbb{R}^n$ in the product space $\mathbb{H}^{n+1}(-4)\times\mathbb{R}^n$ is the integral submanifold of the distribution $[-1]'$. Therefore, $h'Du=-Du$ implies that the gradient of the potential function $Du$ is tangential to the Euclidean factor $\mathbb{R}^n$.
\end{proof}
\begin{rem}
	Theorem~3.1 of Wang \cite{Wang} is a direct corollary of the above Theorem~4.3.
\end{rem}
Before closing this section, we present an example of 3-dimensional $(\kappa,\mu)'$-almost Kenmotsu manifolds admitting a Riemann soliton.

\begin{ex}
	We consider a 3-dimensional manifold
	\begin{align*}
	M=\{(x,y,z)\in \mathbb{R}^3, z\neq0\},
	\end{align*}
	and linear independently vector fields
	\begin{align*}
	e_1=-\frac{\partial}{\partial x}+2y\frac{\partial}{\partial y}-\frac{\partial}{\partial z},\quad e_2=\frac{\partial}{\partial y},\quad e_3=\frac{\partial}{\partial z}.
	\end{align*}
	One can easily check that
	\begin{align*}
	[e_1,e_2]=-2e_2,\quad [e_2,e_3]=0,\quad [e_1,e_3]=0.
	\end{align*}
	On $M$ we define a (1,1)-tensor field $\varphi$ by $\varphi(e_1)=0$, $\varphi(e_2)=e_3$ and $\varphi(e_3)=-e_2$, and we define a Riemannian metric $g$ such that $g(e_i,e_j)=\delta_{ij}$, $1\leq i,j\leq 3$. We denote by $\xi=e_1$ and $\eta$ its dual 1-form with respect to the metric $g$. 
	\par On the other hand, it is not hard to justify that $M$ with the structure $(\varphi,\xi,\eta,g)$ is an almost Kenmotsu structure, which is not Kenmotsu, since the operator $h'$ does not vanish. In fact, it is given by
	\begin{align*}
	h'(e_1)=0,\quad h'(e_2)=e_2,\quad h'(e_3)=-e_3.
	\end{align*}
	Utilization of Koszul formula gives
	\begin{align}
	\begin{gathered}\label{4.28}
	\nabla_{e_1}e_1=0,\quad \nabla_{e_1}e_2=0,\quad \nabla_{e_1}e_3=0,\\
	\nabla_{e_2}e_1=2e_2,\quad \nabla_{e_2}e_2=-2e_1,\quad \nabla_{e_2}e_3=0,\\
	\nabla_{e_3}e_1=0,\quad \nabla_{e_3}e_2=0,\quad \nabla_{e_3}e_3=0,
	\end{gathered}
	\end{align}
	where $\nabla$ denotes the Levi-Civita connection of the Riemannian metric $g$. By a straightforward calculation we have
	\begin{align}\label{4.29}
	\begin{gathered}
	 R(e_1,e_2)e_1=4e_2,\quad R(e_1,e_2)e_2=-4e_1,\quad R(e_1,e_2)e_3=0,\\
	 R(e_1,e_3)e_1=0,\quad R(e_1,e_3)e_2=0,\quad R(e_1,e_3)e_3=0,\\
	 R(e_2,e_3)e_1=0,\quad R(e_2,e_3)e_2=0,\quad R(e_2,e_3)e_3=0.
	\end{gathered}
	\end{align}
	With the help of the expressions of the curvature tensor, we conclude that the
	characteristic vector field $\xi$ belongs to the $(\kappa,\mu)'$-nullity distribution with $\kappa=-2$ and $\mu=-2$.
	\par Let us consider a vector field 
	\begin{align}\label{4.30}
	V=e^{-2x}\frac{\partial}{\partial y}+4(x-z)\frac{\partial}{\partial z}.
	\end{align}
	One can easily check that $div V=-4$, a constant. As a result of \eqref{4.28}, one can get
	\begin{align}\label{4.31}
	\begin{gathered}
	(\pounds_V g)(e_1,e_1)=0, \quad (\pounds_V g)(e_2,e_2)=0, \quad (\pounds_V g)(e_3,e_3)=-8,\\
	(\pounds_V g)(e_1,e_2)=0,\quad (\pounds_V g)(e_1,e_3)=0,\quad (\pounds_V g)(e_2,e_3)=0
	\end{gathered}
	\end{align}
	In view of \eqref{4.31} and \eqref{4.29}, one can easily verify that
	\begin{align*}
	2R(e_i,e_j,e_k,e_l)+4(g\mathbin{\bigcirc\mspace{-15mu}\wedge\mspace{3mu}} g)(e_i,e_j,e_k,e_l)+(g \mathbin{\bigcirc\mspace{-15mu}\wedge\mspace{3mu}} \pounds_V g)(e_i,e_j,e_k,e_l)=0,
	\end{align*}
	for $1\leq i,j,k,l\leq 3$. Thus $g$ is a Riemann soliton with the soliton vector field $V$ as given in \eqref{4.30} and $\lambda=4$. According to Dileo and Pastore \cite{Dileo Pastore2}, we obtain that $(-2,-2)'$-almost Kenmotsu manifold $M$ is locally isometric to the product $\mathbb{H}^2(-4)\times\mathbb{R}$. This verifies Theorem~\ref{t4.2}.
\end{ex}

\end{document}